\newtheorem{theorem}{Theorem}
\newtheorem{lemma}{Lemma}
\newtheorem{corollary}{Corollary}
\newtheorem{definition}{Definition}
\def\done{{1\hskip-2.5pt{\rm l}}}
\def\ep{{\varepsilon}}
\newcommand{\card}{\operatorname{card}}
\newcommand{\osc}{\operatorname{osc}}
\newcommand{\bR}{\mathbb R}
\newcommand{\bC}{\mathbb C}
\newcommand{\bZ}{\mathbb Z}
\newcommand{\bT}{\mathbb T}
\newcommand{\bN}{\mathbb N}
\newcommand{\bP}{\mathbb P}
\newcommand{\bE}{\mathbb E}
\DeclareMathOperator*{\triplesum}{\,\sum\,\sum\,\sum\,}
\DeclareMathOperator*{\doublesum}{\,\sum\,\sum\,}
\begin{document}

\title{The ``pits effect'' for entire functions of exponential type and the Wiener spectrum}
\author{Jacques Benatar \and
Alexander Borichev \and Mikhail Sodin}

\author{Jacques Benatar \thanks{Supported by ERC
Advanced Grant 692616}   \and Alexander Borichev
\thanks{Partially supported by a joint grant of Russian Foundation for Basic Research
and CNRS (projects 17-51-150005-NCNI-a and PRC CNRS/RFBR 2017-2019)
and by the project ANR-18-CE40-0035}  \\
\and
Mikhail Sodin
\thanks{Supported by ERC
Advanced Grant 692616 and by ISF Grant 382/15.} }

\maketitle

\begin{abstract} 
Given a sequence $\xi\colon \mathbb Z_+ \to \mathbb C$, 
we find a simple spectral condition  
which guarantees the angular equidistribution of the zeroes of the Taylor series
\[
F_\xi (z) = \sum_{n\ge 0} \xi (n) \frac{z^n}{n!}\,.
\]
This condition yields practically all known instances of random and pseudo-random 
sequences $\xi$ with this property (due to Nassif, Littlewood, Chen--Littlewood,  Levin, 
Eremenko--Ostrovskii,
Kabluchko--Zaporozhets, Borichev--Nishry--Sodin), and provides several new ones. Among them are Besicovitch
almost periodic sequences and multiplicative random sequences. It also conditionally yields
that the M\"obius function $\mu$ has this property assuming ``the binary Chowla conjecture".
\end{abstract}

\section{Introduction and the main result}
Let $F_\xi$ be an entire function of exponential type represented by the
Taylor series
\[
F_\xi (z) = \sum_{n\ge 0} \xi(n) \frac{z^n}{n!}\,, \qquad \xi\colon \bZ_+ \to \bC\,.
\]
As in~\cite{BNS}, we are interested in the influence of the sequence $\xi$ on the 
asymptotic behaviour of the function $F_\xi$, in particular, on the angular
distribution of its zeroes. 

\begin{definition}
The sequence $\xi\colon \bZ_+\to\bC$ is called an $L$-sequence,
if
\begin{equation}\label{eq:log-as}
\frac{\log |F_\xi (tz)|}{t} \underset{t\to\infty}\longrightarrow |z|\,, \qquad {\rm in\ }L^1_{\rm loc}(\mathbb C)\,.
\end{equation}
\end{definition}
In the classical terminology of the entire function theory~\cite{Levin},
condition~\eqref{eq:log-as} means that the function $F_\xi$ has completely regular growth 
in the Levin-Pfluger sense with the indicator diagram being the closed unit disk.

Since the $L^1_{\rm loc}(\mathbb C)$-convergence of subharmonic fucntions
implies convergence in the sense of distributions, and the Laplacian is continuous in the distributional topology, ~\eqref{eq:log-as} yields
\begin{equation}\label{eq:zeroes-as}
\frac1{t}\, \Delta \log |F_\xi (tz)| \underset{t\to\infty}\longrightarrow \Delta |z| = 
{\rm d} r \otimes {\rm d} \theta\,,
\qquad z=re^{{\rm i}\theta}\,,
\end{equation}
in the sense of distributions, with $r{\rm d} r \otimes {\rm d} \theta$ 
being the planar Lebesgue measure. 
Denoting by $n_F(r; \theta_1, \theta_2)$ the number of zeroes (counted with multiplicities)
of the entire function $F$ in the sector $\bigl\{z\colon 0\le |z|\le r, \theta_1 \le \arg (z) < \theta_2 \bigr\}$
and recalling that $\frac1{2\pi}\, \Delta \log |F|$ is the sum of point masses at zeroes of $F$, we
can rewrite~\eqref{eq:zeroes-as} in a more traditional form:
for every $\theta_1<\theta_2$,
\[
n_{F_\xi}(r; \theta_1, \theta_2) = \frac{(\theta_2 - \theta_1 + o(1))\,r}{2\pi},
\qquad r\to\infty\,,
\]
which means that the angular distribution of zeroes is uniform, while the radial one
is proportional to $r$. Littlewood called such behaviour ``the pits effect'' which he
described as follows: ``If we erect an ordinate $|f(z)|$ at the point $z$ of the $z$-plane, then the resulting surface is an exponentially rapidly rising bowl, approximately of revolution, with exponentially small pits going down to the bottom. The zeros of $f$, 
more generally the $w$-points where $f = w$, all lie in the pits for $|z| > R(w)$. 
Finally the pits are very uniformly distributed in direction, and as uniformly distributed in distance as is compatible with the order $\rho$''~\cite[p.~195]{Littlewood2}.

There are no general results providing conditions for a sequence $\xi$ to be an $L$-sequence,
but rather a collection of interesting examples. Most of them required an individual, sometimes, 
quite involved, treatment. These examples include:

\medskip\noindent
(a) random independent identically distributed $\xi (n)$ (Littlewood--Offord~\cite{LO}, Kab\-luch\-ko--Zaporozhets~\cite{KZ});

\medskip\noindent
(b) stationary sequences with a logarithmic decay of the maximal correlation coefficient~\cite[Theorem~3]{BNS} and Gaussian stationary sequences~\cite[Theorem~4]{BNS};

\medskip\noindent
(c) $\xi (n) = e(Q(n))$, where $Q(x)=\sum_{k\ge 2}q_kx^k$ is a polynomial
with real coefficients $q_k$, at least one of which is irrational~\cite[Theorem~1]{BNS},
a special case $\xi (n) = e(q n^2) $ with irrational $q$ was treated earlier, by
Nassif~\cite{Nassif} and Littlewood~\cite{Littlewood}, when $q$ is a quadratic irrationality, and then, by Eremenko--Ostrovskii~\cite{EO}, for arbitrary irrational $q$;

\medskip\noindent
(d) $e(n^\beta)$ with non-integer $\beta>1$, for 
$1<\beta<\frac32$ by Chen--Littlewood~\cite{CL}, and for $\beta\ge \tfrac32$ 
in~\cite[Theorem~2]{BNS};

\medskip\noindent
(e) uniformly almost periodic $\xi (n)$ (Levin\cite[Chapter~VI, \S 7]{Levin}).

\medskip\noindent Here and elsewhere, $e(t)=e^{2\pi {\rm i} t}$.

\medskip
In this work, we find a simple spectral condition for the sequence $\xi$ to be an $L$-sequence, which provides an easy and uniform treatment of the aforementioned
examples (the only exception is a result of Kabluchko and Zaporozhets~\cite{KZ}
pertaining to independent random variables which may have no second moment), 
as well as of several new ones, which were previously out of reach. 
Among them are Besicovitch
almost periodic sequences and multiplicative random sequences. It also conditionally yields
that the M\"obius function $\mu$ has this property assuming ``the binary Chowla conjecture''.

Our spectral condition is based on a notion which was at the heart of 
a generalized harmonic analysis developed by Wiener~\cite[Chapter~IV]{Wiener}:

\begin{definition} Let $\xi\colon \bZ_+\to\bC$.
We say that $\xi$ is a Wiener sequence if for every $k\ge 0$ there exists
a (finite) limit
$$
\rho(k) = \lim_{n\to\infty} \frac1n\, \sum_{0\le s< n}\,
\xi (s)\overline{\xi (s+k)}.
$$
\end{definition}

Each Wiener sequence $\xi$ has a unique spectral measure and its support is called {\em the Wiener spectrum} of $\xi$. The spectral measure is constructed as follows.
Let $\xi$ be a Wiener sequence. Then immediately, $\xi (n) = o(n^{1/2})$, $n\to\infty$. Furthermore, we set
$$
\rho(-k) = \overline{\rho(k)}=\lim_{n\to\infty}\frac1n\,
\sum_{0\le s<n}\overline{\xi (s)}\xi (s+k),\qquad k\ge 1.
$$
Now, given $j,k\ge 0$, we have
$$
\rho(k-j)=\lim_{n\to\infty}\frac1n\,
\sum_{0\le s< n}\, \xi (s+j)\overline{\xi (s+k)}.
$$
For any complex numbers $c_0,\ldots,c_m$ we obtain
\begin{multline*}
\sum_{0\le j,k\le m}\, c_j\overline{c_k}\rho(k-j) =
\lim_{n\to\infty}\frac1n\, 
\sum_{0\le s< n}\doublesum_{0\le j,k\le m} c_j\overline{c_k}\, 
\xi (s+j) \overline{\xi (s+k)}
\\
= \lim_{n\to\infty}\frac1n\, \sum_{0\le s< n}\
\Bigl| \sum_{0\le j\le m}c_j\xi (s+j)\Bigr|^2 \ge 0.
\end{multline*}
Hence, by the Herglotz theorem, there exists a positive measure $\mu$ on the unit circle, the spectral measure of the Wiener sequence $\xi$, such that
$\rho(k)=\widehat\mu(k)$, $k\in\mathbb Z$.

Our main result is as follows.
\begin{theorem} \label{t1} Let $\xi$ be a 
Wiener sequence such that the closed support of its spectral measure is the whole unit circle. Then $\xi$ is an $L$-sequence.
\end{theorem}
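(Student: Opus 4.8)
The plan is to reduce everything to a single sharp estimate on the logarithmic circular means
\[
m(R):=\frac{1}{2\pi}\int_0^{2\pi}\log\bigl|F_\xi(Re^{{\rm i}\theta})\bigr|\,{\rm d}\theta ,
\]
namely $m(R)=R-o(R)$ as $R\to\infty$. The starting observation is the exact identity $e^{-R}F_\xi(Re^{{\rm i}\theta})=T(\theta,R)$, where $T(\theta,R):=\sum_{n\ge0}\xi(n)\,p_n e^{{\rm i}n\theta}$ and $p_n=p_n(R)=e^{-R}R^n/n!$ are Poisson weights concentrated, for large $R$, in a window of length $\asymp\sqrt R$ around $n\approx R$. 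Thus $\log|F_\xi(Re^{{\rm i}\theta})|=R+\log|T(\theta,R)|$ identically, and since $\xi(n)=o(n^{1/2})$ while $\sum_n p_n=1$, a routine moment computation gives the uniform bound $\sup_\theta|T(\theta,R)|=o(R^{1/2})$. Consequently $t^{-1}\log|F_\xi(tz)|\le|z|+o(1)$ locally uniformly, so the subharmonic functions $u_t(z):=t^{-1}\log|F_\xi(tz)|$ are locally uniformly bounded above; by the compactness of such families they are precompact in $L^1_{\rm loc}(\bC)$, and every subsequential limit $u$ is subharmonic with $u\le|z|$ everywhere.

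Granting the lower bound $m(R)\ge R-o(R)$, the conclusion follows by a soft pinching argument. Fix a subsequence $t_j\to\infty$ along which $u_{t_j}\to u$ in $L^1_{\rm loc}$. The circular average of $u_t$ at radius $r$ equals $t^{-1}m(tr)$, and $L^1_{\rm loc}$ convergence forces convergence of circular averages for a.e.\ $r$; hence $\frac{1}{2\pi}\int_0^{2\pi}u(re^{{\rm i}\theta})\,{\rm d}\theta=\lim_j t_j^{-1}m(t_j r)=r$ for a.e.\ $r$. Since $u(re^{{\rm i}\theta})\le r$ for every $\theta$, a nonnegative integrand with vanishing integral must vanish, so $u(re^{{\rm i}\theta})=r$ for a.e.\ $(r,\theta)$, i.e.\ $u=|z|$. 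As every subsequential limit is $|z|$, the whole family converges, which is exactly~\eqref{eq:log-as}.

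It therefore remains to prove $m(R)\ge R-o(R)$, equivalently $\frac{1}{2\pi}\int_0^{2\pi}\log|T(\theta,R)|\,{\rm d}\theta\ge-o(R)$, and this is where the spectral hypothesis enters. Computing the Fourier coefficients of $|T(\theta,R)|^2$ in $\theta$ and invoking the Wiener property in the form $\frac1n\sum_{0\le s<n}\xi(s+k)\overline{\xi(s)}\to\overline{\rho(k)}=\overline{\widehat\mu(k)}$, one finds that the normalized measures $2\sqrt{\pi R}\,|T(\theta,R)|^2\,\frac{{\rm d}\theta}{2\pi}$ converge weakly to the spectral measure $\mu$ (after the reflection $\phi\mapsto-\phi$); in particular their total masses tend to $\rho(0)>0$. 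The full-support hypothesis $\operatorname{supp}\mu=\bT$ now yields, for every fixed arc $I\subset\bT$ and all large $R$, an $L^2$ minorant $\int_I|T(\theta,R)|^2\,{\rm d}\theta\ge c_I R^{-1/2}$ with $c_I>0$: the energy of $T$ cannot desert any arc.

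The main obstacle is to convert these arc-wise $L^2$ minorants into the logarithmic lower bound, i.e.\ to guarantee that the zeros of $T(\cdot,R)$ do not drag $\frac{1}{2\pi}\int\log|T|$ down to order $-R$ (as they do for $\xi\equiv1$, where $T=e^{R(e^{{\rm i}\theta}-1)}$ has no zeros in the disk and $m(R)\equiv0$). I would partition $\bT$ into short arcs; on each, the $L^2$ minorant produces a point where $|T|\gtrsim R^{-1/4}$, which together with the global majorant $|T|=o(R^{1/2})$ and a local Jensen (Cartan-type) estimate bounds both the number of nearby zeros and the local contribution of $(\log|T|)_-$. Summing over the arcs --- with their number allowed to grow slowly with $R$ --- yields $\frac{1}{2\pi}\int\log|T|\ge-o(R)$ and hence $m(R)=R-o(R)$. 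The two genuinely technical points will be the uniform moment asymptotics underlying the weak convergence (controlling the Poisson tails) and the quantitative local analytic estimate that turns a single not-too-small value on each arc into control of the logarithmic integral there.
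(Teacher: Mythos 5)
Your reduction of \eqref{eq:log-as} to the single bound $m(R)\ge R-o(R)$ (the upper bound from $\xi(n)=o(n^{1/2})$, the $L^1_{\rm loc}$ compactness of the family $u_t$, and the pinching of circular means against the majorant $|z|$) is sound, and your final Jensen/Cartan step is in effect a re-proof of Lemma~\ref{lem2} (Lemma~3.2.1 of \cite{BNS}). The gap is at the one place where the spectral hypothesis enters: the claim that \emph{for every fixed large $R$} the measures $2\sqrt{\pi R}\,|T(\theta,R)|^2\,{\rm d}\theta/(2\pi)$ converge weakly to $\mu$, and hence that $\int_I|T(\theta,R)|^2\,{\rm d}\theta\ge c_I R^{-1/2}$ for every arc $I$ and all large $R$. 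This is false for general Wiener sequences. Indeed, the $k$-th Fourier coefficient of $|T(\cdot,R)|^2$ is $\sum_n \xi(n)\overline{\xi(n+k)}\,p_np_{n+k}$, and the weight $w=p_np_{n+k}$ lives on the window $|n-R|\lesssim \sqrt R\log R$, with $\max w\asymp R^{-1}$, $\osc w\asymp R^{-1}$, and total mass $\asymp R^{-1/2}$. But the Wiener property controls correlations only in the Ces\`aro sense and with an unspecified rate: Lemma~\ref{lem1} gives accuracy $1/\psi(n)$ with $\psi$ possibly tending to infinity arbitrarily slowly, and summation by parts (Lemma~\ref{lem3a}) then determines the weighted sum only up to an error $\asymp(\max w+\osc w)\,R/\psi(R/2)\asymp 1/\psi(R/2)$, which must be compared with the main term $\rho(k)\sum_n w(n)\asymp R^{-1/2}$; the error dominates unless $\psi(R)\gg\sqrt R$, which you may not assume. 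Moreover the claim itself, not merely this way of proving it, fails: take $\xi(n)=e(\alpha n^2)$ with $\alpha$ irrational, whose spectral measure is Lebesgue, and redefine $\xi\equiv 1$ on the sparse windows $|n-R_j|\le R_j^{3/5}$, $R_{j+1}=R_j^{10}$. These windows have zero density, so every limit $\rho(k)$, and hence $\mu$, is unchanged and Theorem~\ref{t1} still applies; but for $R=R_j$ all but $O(e^{-cR_j^{1/5}})$ of the Poisson mass sits where $\xi\equiv1$, so $|T(\theta,R_j)|\le e^{-R_j(1-\cos\theta)}+O(e^{-cR_j^{1/5}})$, and $\int_I|T(\theta,R_j)|^2\,{\rm d}\theta$ is exponentially small for any arc $I$ bounded away from $\theta=0$. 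Your minorant, hence your route to $m(R_j)\ge R_j-o(R_j)$, collapses at these radii (the inequality itself remains true, but cannot be reached this way).

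This is exactly the difficulty that the radial averaging in Lemma~\ref{lem5} is designed to overcome: the paper bounds from below the integral of $|\widetilde W_{s,N}(\varphi)|^2$ over $s\in[r,(1+\delta)r]$ as well as over an angular window, so that the resulting weights $V_h(k)$ are spread over $k\in[r-N,(1+\delta)r+N]$, a range of length $\asymp\delta r$, on which Lemmas~\ref{lem1} and~\ref{lem3a} do apply, with relative error $O(1/\psi(r/2))=o(1)$. The price is that the conclusion becomes pointwise and local --- for each $(r,\theta)$ one finds $r_0\in(r,(1+\delta)r)$ and $\theta_0$ near $\theta$, with $r_0$ depending on $\theta$, where $|F_\xi(r_0e(\theta_0))|\gtrsim e^{r_0}r_0^{-1/4}$ --- and Lemma~\ref{lem2} is then what converts this dense set of large values into \eqref{eq:log-as}. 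You could try to salvage your scheme by observing that, by Jensen's formula, $m$ is non-decreasing, so that it suffices to prove $m(R)\ge R-o(R)$ along a multiplicatively dense set of radii; but any proof of the arc-wise minorants must now involve an average over $R$, and the good radii it produces vary from arc to arc, so reassembling them into control of the full angular integral at a single radius drives you back to the pointwise-lower-bound-plus-Lemma~\ref{lem2} architecture. In short, everything before and after the spectral step is fine, but the spectral step needs radial averaging, and once that is added your proof essentially coincides with the paper's.
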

It is worth mentioning that one does not need to know the spectral measure $\mu$ in order to check that its closed support has no gaps. It is not difficult to show (see~\cite[Theorem~1]{CKM}) that the sequences of measures 
${\rm d}\mu_n (\theta) = \tfrac1n\, \bigl| \sum_{0\le s < n} \xi (s) e(-s\theta) \bigr|^2\, {\rm d} \theta $
and $ {\rm d}\nu_r (\theta)  = (1-r)|f_\xi (re(-\theta))|^2\, {\rm d}\theta $ with
$f_\xi (z) = \sum_{n\ge 0} \xi (n) z^n$,
converge weakly to the measure $ \mu $ as $n\to \infty $ and $r\to 1$ correspondingly. 
Thus, for instance, it suffices to verify that, for any arc $I$,
$\displaystyle \liminf_{n\to\infty} \mu_n(I) >0$.

\medskip
The proof of Theorem~\ref{t1}
will be given in Section~\ref{sect:proof}; its main idea is similar to the
one used in~\cite{BNS}. Since the sequence $\xi$ does not
grow faster than $\sqrt{n}$, the upper bound $ |F_\xi (z) | \lesssim \sqrt{|z|}\, e^{|z|} $
is a straightforward consequence of the estimates for the Taylor coefficients. A matching lower bound cannot hold everywhere (the function $F_\xi$ has zeroes), but a simple argument
based on the subharmonicity of $\log |F_\xi|$, yields that it suffices to find
a sufficiently dense set of points $z$ at which such a lower bound exists. We cannot explicitly locate such points $z$, instead, in several steps, we estimate from below 
the averages of $|F_\xi|^2$ over sets of the form 
$\{w=te(\varphi)\colon r\le t \le r(1+\delta), |\theta-\varphi|\le \delta  \}$ with 
any $\delta>1$, $\theta\in [-\tfrac12, \tfrac12]$, 
and $r\ge r_\delta$. This estimate will be based on the spectral properties of the sequence $\xi$.

In Section~\ref{sect:examples} we discuss various examples of Wiener sequences that have no  gaps in the support of their spectral measures. Likely, all, or almost all of them are well-known to the experts, however, in combination with Theorem~\ref{t1} they provide new instances of $L$-sequences.
 

\subsubsection*{Acknowledgments}
We thank Alon Nishry for numerous helpful  discussions.

\section{Proof of Theorem~\ref{t1}}\label{sect:proof}

\subsection{Two lemmas on Wiener sequences}

\begin{lemma} \label{lem1} Let $\xi$ be a Wiener sequence. Then there exists a
non-decreasing function $\psi$, $\lim_{x\to\infty}\psi(x)=\infty$, such that
\begin{equation}
\Bigl|  \frac1n\sum_{0\le s< n}\xi (s)\overline{\xi (s+k)}-\rho(k)
 \Bigr|\le \frac{1}{\psi(n)},\qquad n\ge 1,\,|k|\le \psi(n).
\label{15g}
\end{equation}
\end{lemma}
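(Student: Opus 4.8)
The plan is to reduce the claim to a soft diagonalisation over the countably many convergence statements built into the definition of a Wiener sequence. Write $a_n(k) = \frac1n \sum_{0\le s<n}\xi(s)\overline{\xi(s+k)}$, so that for each fixed $k \ge 0$ we have $a_n(k) \to \rho(k)$ as $n\to\infty$ by hypothesis. The negative indices cause no trouble: for $k=-j$ with $j>0$, a shift of the summation variable together with conjugation identifies the relevant average (interpreted with the natural convention $\rho(-j)=\overline{\rho(j)}$) with $\overline{a_{n-j}(j)}\cdot\frac{n-j}{n}$ up to at most $j$ boundary terms, each of size $o(n)$; since we shall only ever take $|k|\le\psi(n)$ with $\psi$ growing slowly, these corrections are negligible and the negative case follows from the positive one by conjugation. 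Thus it suffices to produce a single non-decreasing $\psi\to\infty$ governing all the errors $|a_n(k)-\rho(k)|$ simultaneously for $0 \le k \le \psi(n)$.

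First I would fix, for each integer $m\ge 1$ and each $k$ with $0\le k \le m$, a threshold $M_{k,m}$ beyond which $|a_n(k)-\rho(k)|\le 1/m$; this exists precisely because $a_n(k)\to\rho(k)$. Setting $N_m = \max\bigl(m,\, N_{m-1}+1,\, \max_{0\le k\le m} M_{k,m}\bigr)$ yields a strictly increasing sequence $N_1<N_2<\cdots$ tending to $\infty$ with the key property that $n\ge N_m$ and $0\le k\le m$ together force $|a_n(k)-\rho(k)|\le 1/m$. The finiteness of each $N_m$ is exactly where the Wiener hypothesis is used: for fixed $m$ we are taking a maximum over the finite index set $\{0,1,\dots,m\}$, so only finitely many of the individual convergences $a_n(k)\to\rho(k)$ are invoked at once.

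Next I would define $\psi$ by declaring $\psi(n)=m$ on each block $N_m \le n < N_{m+1}$, and $\psi(n)$ equal to a suitably small positive constant on the initial segment $1\le n< N_1$ (small enough that the finitely many errors there are dominated by $1/\psi(n)$, which is harmless since $\psi$ need only be non-decreasing and eventually large). By construction $\psi$ is non-decreasing and $\psi(n)\to\infty$. Verifying~\eqref{15g} is then immediate: given $n$, let $m=\psi(n)$, so that $n\ge N_m$; for every $k$ with $|k|\le\psi(n)=m$ the defining property of $N_m$ gives $|a_n(k)-\rho(k)|\le 1/m = 1/\psi(n)$.

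I expect no serious obstacle here — the statement is a routine ``uniform rate on a slowly growing window'' extracted from pointwise convergence. The only points demanding a little care are the simultaneous requirements that $\psi$ be non-decreasing and tend to infinity (handled by forcing $N_m\ge m$ and $N_m>N_{m-1}$) and the bookkeeping for negative $k$ (handled by the conjugation symmetry and the negligible boundary terms noted above).
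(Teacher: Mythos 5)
Your proof is correct and is essentially the paper's own argument: the same diagonalisation over the countably many pointwise convergences, choosing for each $m$ a threshold $N_m$ (the paper's $M_m$) beyond which all errors with $|k|\le m$ are at most $1/m$, and then defining $\psi$ as the step function equal to $m$ on $[N_m,N_{m+1})$. The only differences are cosmetic: the paper packages the thresholds via the tail suprema $\varepsilon(m,k)=\max_{n\ge m}\bigl|\frac1n\sum_{0\le s<n}\xi(s)\overline{\xi(s+k)}-\rho(k)\bigr|$ and leaves the negative $k$ case implicit (with the convention $\rho(-k)=\overline{\rho(k)}$ it is just complex conjugation), whereas you spell out both the per-$(k,m)$ thresholds and the bookkeeping for $k<0$.
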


\begin{proof} For every $k$,
$$
\max_{n\ge m}\Bigl|  \frac1n\sum_{0\le s< n}\xi (s)\overline{\xi (s+k)}-\rho(k)
 \Bigr|=\ep(m,k)\to 0,\qquad m\to\infty.
$$
Set $M_0=0$. Given $m\ge 1$, we can find $M_m>M_{m-1}$ such that
$$
\max_{|k|\le m}\ep(M_m,k)\le \frac1m,
$$
and set $\psi(x)=m$, $M_m\le x<M_{m+1}$, $m\ge 0$.
\end{proof}

For a positive function $f$ on $[A, B]\cap \mathbb Z$, we put
$$
\osc_{[A,B]}f\, \stackrel{\rm def}=\, \sum_{A<q<B} |f(q)-f(q-1)|\,.
$$

\begin{lemma} \label{lem3a} Let $\xi$ satisfy \eqref{15g} for some non-decreasing function $\psi$, $\lim_{x\to\infty}\psi(x)=\infty$.
Given integers $0<A<B$ and $|h|\le \psi(A)$ and a positive function $f$ we have
$$
\Bigl| \sum_{A\le k<B} \xi (k)\overline{\xi (k+h)}f(k)-\rho(h)\sum_{A\le k<B} f(k) \Bigr| \le 2 \bigl( f(A)+\osc_{[A,B]}f \bigr)\, \frac{B}{\psi(A)}\,.
$$
\end{lemma}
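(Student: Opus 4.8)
The plan is to reduce the weighted correlation sum to the \emph{unweighted} partial sums, which are exactly what hypothesis \eqref{15g} controls, by means of summation by parts. First I would introduce the running sum
\[
S(n) = \sum_{0\le s<n}\xi(s)\overline{\xi(s+h)}
\]
together with its centred version $g(n) = S(n) - n\rho(h)$. Because $\psi$ is non-decreasing and $|h|\le\psi(A)$, for every $n$ with $A\le n\le B$ we have $|h|\le\psi(A)\le\psi(n)$, so \eqref{15g} is legitimately applicable at the scale $n$ with $k=h$, giving the uniform bound
\[
|g(n)| = \bigl|S(n)-n\rho(h)\bigr| \le \frac{n}{\psi(n)} \le \frac{B}{\psi(A)}, \qquad A\le n\le B.
\]
The last crude step uses $n\le B$ and $\psi(n)\ge\psi(A)$, and is the (only) reason the final bound carries a factor $B$. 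This uniform control on the centred partial sums is the engine of the estimate.

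Next I would write $\xi(k)\overline{\xi(k+h)} = S(k+1)-S(k) = \bigl(g(k+1)-g(k)\bigr)+\rho(h)$, so that the quantity to be estimated separates off the main term exactly:
\[
\sum_{A\le k<B}\xi(k)\overline{\xi(k+h)}f(k) - \rho(h)\sum_{A\le k<B}f(k) = \sum_{A\le k<B}\bigl(g(k+1)-g(k)\bigr)f(k).
\]
Abel summation then rearranges the right-hand side into a boundary contribution $g(B)f(B-1)-g(A)f(A)$ plus an interior sum $\sum_{A<k<B}g(k)\bigl(f(k-1)-f(k)\bigr)$, in which only the increments of $f$, and not $f$ itself, appear; note that the index range $A<k<B$ of the interior sum matches precisely the range defining $\osc_{[A,B]}f$.

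Applying $|g(n)|\le B/\psi(A)$ term by term, and using $f>0$ so that $|f(B-1)|=f(B-1)$ etc., the boundary and interior pieces are together dominated by
\[
\frac{B}{\psi(A)}\Bigl(f(B-1)+f(A)+\osc_{[A,B]}f\Bigr).
\]
Finally I would telescope $f(B-1)=f(A)+\sum_{A<q<B}\bigl(f(q)-f(q-1)\bigr)\le f(A)+\osc_{[A,B]}f$; substituting this and collecting terms produces the factor $2\bigl(f(A)+\osc_{[A,B]}f\bigr)$, which is exactly the claimed bound.

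I do not anticipate a genuine obstacle: the argument is essentially bookkeeping in the summation by parts. The one point deserving care is verifying that \eqref{15g} may be invoked uniformly across all scales $n\in[A,B]$ (including the endpoint $n=B$), and it is precisely here that the monotonicity of $\psi$ together with the hypothesis $|h|\le\psi(A)$ is used, guaranteeing $|h|\le\psi(n)$ throughout.
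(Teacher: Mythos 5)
Your proof is correct and follows essentially the same route as the paper: summation by parts combined with the uniform bound on the centred partial sums that \eqref{15g} provides (including the same observation that monotonicity of $\psi$ lets one apply \eqref{15g} at every scale $n\in[A,B]$). The only difference is bookkeeping: the paper expands $f(k)$ around $f(A)$ so that tail sums $\sum_{q\le k<B}$ appear, each costing $2B/\psi(A)$ as a difference of two centred sums, whereas you use standard Abel summation with boundary terms and recover the factor $2$ by telescoping $f(B-1)\le f(A)+\osc_{[A,B]}f$ --- both yield the identical bound.
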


\begin{proof} Applying summation by parts and then using \eqref{15g}, we get
\begin{align*}
\Bigl| &\sum_{A\le k<B} \xi (k)\overline{\xi (k+h)}f(k) - \rho(h)\sum_{A\le k<B} f(k) \Bigr|
\\ &\le f(A)
\Bigl| \sum_{A\le k<B} \xi (k)\overline{\xi (k+h)} - \rho(h)(B-A)\Bigr| \\
&\quad +\sum_{A< q<B} |f(q)-f(q-1)| \cdot 
\Bigl|\sum_{q\le k<B} \xi (k)\overline{\xi (k+h)}-\rho(h)(B-q)\Bigr| \\ &\le
f(A)\frac{2B}{\psi(A)}+\sum_{A< q<B} |f(q)-f(q-1)|\frac{2B}{\psi(A)} \\
&\le  2(f(A)+\osc_{[A,B]}f)\frac{B}{\psi(A)}\,,
\end{align*}
which proves the lemma.
\end{proof}

\subsection{A lemma on entire functions of exponential type}

\begin{lemma}[Lemma~3.2.1 in \cite{BNS}] \label{lem2} Let $F$ be an entire function of exponential type with 
indicator function
\[
h^{F}(\theta) \stackrel{\rm def}= \limsup_{r\to\infty}
r^{-1}\, \log|F(re(\theta)| \le 1\,.
\] 
Suppose that for every $\theta\in [-\tfrac12, \tfrac12]$
there exists a sequence of $R_j$ such  that $\lim_{j\to\infty}R_j=\infty$, $\lim_{j\to\infty}R_{j+1}/R_j=1$,  and a sequence
of $\theta_j\to\theta$ such that
$$
\liminf_{j\to\infty}\frac{1}{R_j}\log|F(R_je(\theta_j))|\ge 1.
$$
Then, condition~\eqref{eq:log-as} holds for the function $F$, that is, it is of completely regular growth with $h^{F}\equiv 1$.
\end{lemma}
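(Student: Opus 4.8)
The plan is to work with the rescaled subharmonic functions $u_t(z) = t^{-1}\log|F(tz)|$ and to show that they converge to $|z|$ in $L^1_{\mathrm{loc}}(\bC)$, which is precisely~\eqref{eq:log-as}. First I would record the upper bound: since $F$ has exponential type and $h^F\le 1$, the standard uniform estimate of entire-function theory gives $\log|F(z)|\le(1+\ep)|z|$ for $|z|\ge r_0(\ep)$, so that $u_t(z)\le(1+\ep)|z|$ on every compact set once $t$ is large. Consequently the family $\{u_t\}$ is locally uniformly bounded above, hence precompact in $L^1_{\mathrm{loc}}$, and every subsequential limit $v$ is subharmonic with $v(z)\le|z|$. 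Since $|z|$ is the largest function such a limit can be, it suffices to prove the matching integral lower bound $\liminf_{t\to\infty}\int_{|z|\le\rho}u_t\,dm\ge\int_{|z|\le\rho}|z|\,dm$ for every $\rho$: together with $v\le|z|$ this forces every subsequential limit to equal $|z|$ almost everywhere, and hence the whole family converges to $|z|$.

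To produce the lower bound I would first convert the pointwise hypothesis at the good points $z_k=R_je(\theta_j)$ into a lower bound on most of a small disk. Fix small $\eta,\ep>0$ and apply the sub-mean value inequality to the subharmonic function $\log|F|$ on $D(z_k,\eta R_j)$: its average over the disk is at least $\log|F(z_k)|\ge R_j(1-o(1))$, while the upper bound gives $\log|F(w)|\le(1+\ep)(1+\eta)R_j$ for $w$ in the disk. A one-line comparison of the average, the maximum and the measure of the sublevel set then shows that $\log|F(w)|\ge(1-\lambda)R_j\ge(1-2\lambda)|w|$ on all of $D(z_k,\eta R_j)$ except a subset of relative area $O\bigl((\ep+\eta+o(1))/\lambda\bigr)$. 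Thus around each good point there is a disk of relative radius $\eta$ on the bulk of which, after rescaling, $u$ already attains its maximal growth up to the factor $1-2\lambda$.

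It remains to tile a large annulus by such disks. Here I would use both density assumptions: for a fixed direction the condition $R_{j+1}/R_j\to1$ makes the good radii $(1+\eta)$-dense in modulus beyond some threshold, while the existence of a sequence $\theta_j\to\theta$ for every $\theta$, combined with compactness of $[-\tfrac12,\tfrac12]$, lets me choose finitely many base directions and, beyond a common threshold radius, good points that are $\eta$-dense in angle and $(1+\eta)$-dense in modulus. Every sufficiently large $w$ then lies in some disk $D(z_k,\eta|z_k|)$ with $|z_k|\approx|w|$, so after rescaling the lower bound $u_t\ge(1-2\lambda)|z|$ holds on all of $\{|z|\le\rho\}$ outside a set of small relative area, yielding $\liminf_{t}\int_{|z|\le\rho}u_t\,dm\ge(1-2\lambda)(1-o_\eta(1))\int_{|z|\le\rho}|z|\,dm$; letting $\eta,\ep,\lambda\to0$ gives the required integral bound.

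The step I expect to be the main obstacle is this last one: making the covering uniform across directions despite the fact that the radii $R_j$ and the angles $\theta_j$ furnished by the hypothesis depend on $\theta$. The resolution is to fix the target accuracy first, extract a finite $\eta$-net of directions, and only then pass to a common threshold radius beyond which all the chosen rays are simultaneously radially dense. The balancing of constants—the disk radius $\eta R_j$ must exceed the angular spacing $\approx\eta R_j$ between neighbouring good rays while keeping the loss factor $1-2\lambda$ close to $1$—is where care is needed, but it is routine once the order of quantifiers has been fixed.
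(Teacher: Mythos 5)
First, a point of order: the paper does not prove Lemma~\ref{lem2} at all --- it is quoted, with attribution, from \cite{BNS} (Lemma~3.2.1 there), so your attempt can only be measured against the proof in that reference. The argument there is, in outline, shorter and structurally different from yours: after establishing, as you do, that the rescalings $u_t(z)=t^{-1}\log|F(tz)|$ are precompact in $L^1_{\rm loc}$ with every subsequential limit $v$ subharmonic and $\le|z|$, it converts the hypothesis directly into the pointwise bound $v\ge|z|$ by combining a Hartogs-type lemma (for $L^1_{\rm loc}$-convergent sequences of subharmonic functions, $\limsup_j \sup_K u_j\le \sup_K v$ on compacts $K$) with the upper semicontinuity of $v$: the rescaled good points $R_je(\theta_j)/t$ accumulate at every $z\ne 0$ thanks to $R_{j+1}/R_j\to1$ and $\theta_j\to\theta$, the Hartogs lemma then gives $\sup_{\overline D(z,r)}v\ge|z|-O(r)$ for every small $r$, and upper semicontinuity yields $v(z)\ge|z|$. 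No exceptional sets, coverings, or area estimates appear.

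Your route --- reducing to the integral lower bound $\liminf_t\int_{|z|\le\rho}u_t\,dm\ge\int_{|z|\le\rho}|z|\,dm$, spreading the pointwise hypothesis over most of each disk $D(z_k,\eta R_j)$ by the sub-mean-value inequality, and tiling by a finite net of directions --- is a legitimate alternative, and your individual estimates (the uniform upper bound, the relative-area bound $O((\ep+\eta+o(1))/\lambda)$ for the sublevel set) are correct. But there is a genuine gap at the very last step. From ``$u_t\ge(1-2\lambda)|z|$ off a set of small relative area'' you infer the integral lower bound; this inference is false for functions that are not bounded below, and $u_t$ is very much not bounded below: it equals $-\infty$ at the zeros of $F$, and deep pits of $\log|F|$ are exactly what the theorem is about. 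A set of arbitrarily small area can make $\int u_t\,dm$ arbitrarily negative, so you must control $\sup_t\int_E|u_t|\,dm$ over sets $E$ of small measure. This is fixable with tools already on your table --- norm-precompact families in $L^1(K)$ are uniformly integrable (alternatively, write $u_t$ as a Riesz potential plus a harmonic function, bound the mass of $\Delta u_t$ on $\{|z|\le2\rho\}$ via Jensen's formula, and use that $\sup_w\int_E\bigl|\log|z-w|\bigr|\,dm(z)\to0$ as $|E|\to0$) --- but as written the step is unjustified, and it is precisely the point where subharmonicity, rather than bookkeeping, must enter. Two smaller omissions of the same kind: local upper boundedness alone does not give $L^1_{\rm loc}$-precompactness (one must also rule out $u_t\to-\infty$ uniformly on compacts, which the hypothesis does, since rescaled good points populate every fixed annulus), and your covering needs a bounded-multiplicity selection before the per-disk exceptional areas may be summed; both are routine to repair.
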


\subsection{Approximating the Taylor series by an exponential sum}
Given $r>0$ and an integer $N$ such that $N \simeq r^{1/2}\log r$, set
\[
\gamma(k,r) =
\frac{k-r}{2k} + \frac{(k-r)^2}{2k} + \frac{(k-r)^3}{3k^2}
\]
and
\[
\widetilde {W}_{r,N}(\theta) =
\sum_{|k-r|\le N}\xi (k)e(k\theta)e^{-\gamma(k,r)},
\]
Furthermore, set $U(r)=e^r/\sqrt{2\pi r}$.

\begin{lemma} \label{lem3} Let $\xi$ be a Wiener sequence. Then
$$
\frac{|F_\xi(re(\theta))|}{U(r)}-|\widetilde {W}_{r,N}(\theta)|=o(1),\qquad r\to\infty,\,\theta\in[0,2\pi].
$$
\end{lemma}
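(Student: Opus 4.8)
The plan is to prove the stronger statement that
\[
\frac{F_\xi(re(\theta))}{U(r)} - \widetilde {W}_{r,N}(\theta) = o(1), \qquad r\to\infty,
\]
uniformly in $\theta$; the assertion of the lemma then follows from the reverse triangle inequality $\bigl| |a|-|b| \bigr|\le |a-b|$. Writing $L(k,r) = \log\bigl(\tfrac{r^k/k!}{U(r)}\bigr)$, we have $\tfrac{F_\xi(re(\theta))}{U(r)} = \sum_{k\ge 0}\xi (k)e(k\theta)e^{L(k,r)}$, so
\[
\frac{F_\xi(re(\theta))}{U(r)} - \widetilde {W}_{r,N}(\theta) = \sum_{|k-r|>N}\xi (k)e(k\theta)e^{L(k,r)} + \sum_{|k-r|\le N}\xi (k)e(k\theta)\bigl(e^{L(k,r)} - e^{-\gamma(k,r)}\bigr).
\]
Since $|e(k\theta)|=1$, I would bound each sum by its termwise absolute values, so all estimates are automatically uniform in $\theta$. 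Throughout I use the a priori bound $\xi (k)=o(k^{1/2})$, which holds for any Wiener sequence as recorded in the introduction.

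The weights $r^k/k!$ are, up to normalization, the Poisson weights of mean $r$, and $U(r)=e^r/\sqrt{2\pi r}\approx r^r/r!$ is their peak value, so $e^{L(k,r)}$ is the Poisson profile rescaled to have peak $\approx 1$. Stirling's formula gives, with $u=k-r$,
\[
L(k,r) = r\Bigl[\tfrac ur - \bigl(1+\tfrac ur\bigr)\log\bigl(1+\tfrac ur\bigr)\Bigr] + \tfrac12\log\tfrac rk + O\!\bigl(\tfrac 1k\bigr),
\]
where the bracket equals $-\tfrac{u^2}{2r}+\tfrac{u^3}{6r^2}-\cdots$ and $\tfrac12\log(r/k)=-\tfrac{u}{2r}+\cdots$. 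For the tail sum I would first observe that on $|u|\le r/2$ the bracketed term is $\le -c\,u^2/r$, so $e^{L(k,r)}\lesssim e^{-c u^2/r}$; at $|u|=N\simeq r^{1/2}\log r$ this is already $r^{-c\log r}$, and a Gaussian tail estimate bounds $\sum_{N<|u|\le r/2}k^{1/2}e^{-cu^2/r}$ by $r\cdot r^{-c'\log r}=o(1)$. For $|u|>r/2$ the factorial growth and decay of the Poisson weights make the contribution exponentially small relative to $U(r)$. Hence the tail sum is $o(1)$.

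For the main sum the plan is to show that $\gamma(k,r)$ is exactly the truncation of $-L(k,r)$ that removes every \emph{dangerous} term. Indeed, weighting a monomial $u^n/r^m$ occurring in $L+\gamma$ by $e^{-\gamma(k,r)}\lesssim e^{-u^2/(2r)}$, summing over $|u|\le N$ against the Gaussian moments $\sum_u e^{-u^2/(2r)}|u|^n\lesssim r^{(n+1)/2}$, and multiplying by $\max_{|k-r|\le N}|\xi (k)|=o(r^{1/2})$ yields a contribution $o\bigl(r^{(n+2)/2-m}\bigr)$, which is $o(1)$ precisely when $n\le 2m-2$. The terms of $L$ violating this are the linear $-\tfrac{u}{2r}$, the quadratic $-\tfrac{u^2}{2r}$ and the cubic $+\tfrac{u^3}{6r^2}$, and a direct expansion shows these are matched by the three terms of $\gamma$ (the $u^3/r^2$ coefficient of $\gamma$ being $\tfrac13-\tfrac12=-\tfrac16$), leaving $L(k,r)+\gamma(k,r)=-\tfrac{u^2}{4r^2}-\tfrac{1}{12r}+O(u^4/r^3)+\cdots$, all of whose monomials satisfy $n\le 2m-2$. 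Consequently $\bigl|e^{L+\gamma}-1\bigr|\lesssim u^2/r^2+1/r+u^4/r^3$ on $|u|\le N$, and
\[
\sum_{|k-r|\le N}|\xi (k)|\,e^{-\gamma(k,r)}\bigl|e^{L(k,r)+\gamma(k,r)}-1\bigr| \lesssim o(r^{1/2})\sum_{|u|\le N}e^{-u^2/(2r)}\Bigl(\tfrac{u^2}{r^2}+\tfrac1r+\tfrac{u^4}{r^3}\Bigr) = o(r^{1/2})\cdot O(r^{-1/2}) = o(1).
\]

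The main obstacle is precisely the bookkeeping in the last paragraph: one must check that $\gamma$ cancels the expansion of $L$ up to and including the cubic order, and that no further term is needed. The cubic is the genuinely delicate one — were it left in the error, its contribution would be $o(r^{1/2})\cdot O(1)$, which is \emph{not} $o(1)$ — and its cancellation depends on combining the $\tfrac{u^3}{3k^2}$ summand of $\gamma$ with the $u^3/r^2$ part arising from the expansion of $\tfrac{u^2}{2k}$. A secondary point to watch is that the argument genuinely requires $\xi (k)=o(k^{1/2})$ rather than merely $O(k^{1/2})$, since under the weaker hypothesis the main-sum error would be only of borderline size $O(1)$.
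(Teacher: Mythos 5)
Your proof is correct, and its skeleton — splitting at $|k-r|=N$, killing the tails with Stirling, and recognizing $\gamma(k,r)$ as a truncated expansion of $-\log\bigl(r^k/(k!\,U(r))\bigr)$ — matches the paper's; the genuine difference is in how the central block is estimated. The paper applies Cauchy--Schwarz, pairing $\sum_{|k-r|\le N}|\xi(k)|^2=o(r)$ (a bound that uses the Wiener property itself: the window sum is $2\rho(0)N+o(r)$, whereas the pointwise bound $\xi(k)=o(k^{1/2})$ alone would only give $o(rN)$) with the crude uniform Stirling error $r^k/(k!\,U(r))=\bigl(1+O(\log^2 r/r)\bigr)e^{-\gamma(k,r)}$; this yields $o(r)\cdot O(\log^5 r/r^{3/2})=o(1)$ with a power of $r$ to spare, so no fine cancellation analysis is needed there. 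You instead bound termwise, using only $\max_{|k-r|\le N}|\xi(k)|=o(r^{1/2})$, which sits exactly at the critical threshold: as you correctly note, the crude uniform error bound would only give $o(\log^4 r)$ for the central sum, so you are forced into the monomial-by-monomial bookkeeping showing that the dangerous terms $u/r$, $u^2/r$, $u^3/r^2$ cancel exactly and that the Gaussian-weighted sum of the remainder is $O(r^{-1/2})$ with no logarithmic loss. Your cancellation arithmetic checks out (the $u^3/r^2$ coefficients combine as $\tfrac16-\tfrac12+\tfrac13=0$, and every surviving monomial $u^n/r^m$ satisfies $n\le 2m-2$), so the argument closes. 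What each route buys: yours needs less input on $\xi$ (only the pointwise $o(\sqrt{k})$ bound, not the windowed second moment) and explains conceptually why $\gamma$ consists of exactly those three terms; the paper's Cauchy--Schwarz device is more robust, tolerating polylogarithmic losses and making the Stirling bookkeeping essentially painless.
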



Note that if the Wiener sequence $\xi$ is bounded, then, as in~\cite{BNS},
instead of $\widetilde {W}_{r,N}$ we can use the sum
\[
W_{r,N}(\theta) =
\sum_{|k-r|\le N}\xi (k)\,e(k\theta)\,e^{-|k-r|^2/(2r)}\,,
\]
which makes the proof of Theorem~\ref{t1} somewhat simpler (cf. Lemma~4.1.1 in \cite{BNS}).

\begin{proof}
First, we deal with the sum over $k<r-N$. Denote by $p$ the largest integer smaller than $r-N$.
Since the sequence $(r^n/n!)_n$ increases for $n<r$, we have
$$
\sum_{k<r-N}|\xi (k)|\frac{r^k}{k!}\lesssim \sum_{k<r-N}k^{1/2}\frac{r^k}{k!}\lesssim r^{3/2}\frac{r^{p}}{p!}
$$
By Stirling's formula, we have
\begin{align*}
\frac{r^{3/2}}{U(r)}\frac{r^p}{p!} &\lesssim \frac{r^{3/2}\,r^{p+1/2}e^p}{e^rp^{p+1/2}} \\
&\lesssim r^{3/2}\exp\Bigl(-(r-N)\log\Bigl(1-\frac{N}{r}\Bigr)-N\Bigr) \\
&= r^{3/2}\exp\Bigl(-\frac{N^2}{2r}+O\Bigl(\frac{N^3}{r^2}\Bigr)\Bigr) \\
&= o(1), \qquad r\to\infty.
\end{align*}

Second, we deal with the sum over $k>r+N$. Denote by $p$ the smallest integer larger than $r+N$.
Since the sequence $(n^{2}r^n/n!)_n$ decreases for $n\ge r+1$, we have
$$
\sum_{k>r+N}|\xi (k)|\frac{r^k}{k!}\lesssim 
\sum_{k>r+N}k^{1/2}\frac{r^k}{k!}\lesssim r^{3/2}\frac{r^{p}}{p!}
$$
By Stirling's formula, we have
$$
\frac{r^{3/2}}{U(r)}\frac{r^p}{p!}=o(1),\qquad r\to\infty.
$$

Finally, we turn to the central group of indices.
We have
\begin{align*}
\Bigl|\sum_{|k-r|\le N}\xi (k)e(k\theta) &\Bigl(\frac{r^k}{k!U(r)}- e^{-\gamma(k,r)}\Bigr)\Bigr|^2 \\
&\le
\sum_{|k-r|\le N}|\xi_k|^2\cdot\sum_{|k-r|\le N} \Bigl(\frac{r^k}{k!U(r)}- e^{-\gamma(k,r)}\Bigr)^2 \\
&= o(r)\cdot
\sum_{|k-r|\le N}\Bigl(\frac{\sqrt{2\pi r}\, r^k}{k!e^r}- e^{-\gamma(k,r)}\Bigr)^2.
\end{align*}
Again by Stirling's formula, for $|k-r|\le N$, we have
\begin{align*}
\frac{\sqrt{2\pi r}}{e^r} \cdot \frac{r^k}{k!} &=
\Bigl( 1 + O\Bigl( \frac1{r} \Bigr) \Bigr)\, e^{k-r}\,
\Bigl( \frac{r}{k} \Bigr)^{k+1/2} \\
&= \Bigl( 1 + O\Bigl( \frac1{r} \Bigr) \Bigr)\,
\exp\Bigl( k-r + \bigl(k+1/2\bigr)\log\Bigl(1-\frac{k-r}k \Bigr)\, \Bigr) \\
&= \Bigl( 1 + O\Bigl( \frac1{r} \Bigr) \Bigr)\,
\exp\Bigl( -\gamma(k, r) + O\Bigl( \frac{\log^2\,r}r \Bigr) \, \Bigr) \\
&=\Bigl(1+O\Bigl(\frac{\log^2 r}{r}\Bigr)\Bigr)e^{-\gamma(k,r)},
\end{align*}
and hence,
$$
\sum_{|k-r|\le N}\Bigl(\frac{\sqrt{2\pi r} r^k}{k!e^r}- e^{-\gamma(k,r)}\Bigr)^2\lesssim \frac{\log^5r}{r^{3/2}}.
$$
Finally,
$$
\frac{|F_\xi(re(\theta))|}{U(r)}-|\widetilde {W}_{r,N}(\theta)|=o(1),\qquad r\to\infty\,,
$$
proving the lemma.
\end{proof}


\subsection{A lower bound for the exponential sum
$\widetilde {W}$ (beginning)}
\begin{lemma} \label{lem5} Let $\xi$ be a 
Wiener sequence. Given $\delta>0$, $r>r(\delta)$, and $\theta\in[0,2\pi]$, there exist $r_0\in (r,r+\delta r)$ and $\theta_0\in(\theta-\delta,\theta+\delta)$ such that
for some (every) $N \simeq r^{1/2}\log r$ we have
$$
|\widetilde {W}_{r_0,N}(\theta_0)|\gtrsim r^{1/4}.
$$
\end{lemma}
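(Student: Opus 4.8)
The plan is to prove the pointwise lower bound by a second-moment argument: I bound from below a double average of $|\widetilde{W}_{r_0,N}(\cdot)|^2$ over the box $(r,r+\delta r)\times(\theta-\delta,\theta+\delta)$ and then invoke the inequality $\max\ge{\rm average}$ to produce the point $(r_0,\theta_0)$. This matches the strategy announced in the introduction. Fix once and for all a non-negative trigonometric polynomial $w$ of degree $L=L(\delta)$ (a Fej\'er-type kernel) with $w\ge c(\delta)>0$ on the arc $(\theta-\delta,\theta+\delta)$ and with $\ep(\delta):=\sup_{|\phi-\theta|\ge\delta}w(\phi)$ as small as we wish; since $\delta$ is a fixed parameter, $L$, $c(\delta)$ and $\ep(\delta)$ are constants independent of $r$. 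Fixing a single $N\simeq r^{1/2}\log r$, which serves every $r_0\in(r,r+\delta r)$ because $r_0\simeq r$ there, I study
\[
\mathcal A=\frac{1}{\delta r}\int_r^{r+\delta r}\Bigl(\int_0^1|\widetilde{W}_{r_0,N}(\phi)|^2\,w(\phi)\,{\rm d}\phi\Bigr)\,{\rm d}r_0\,.
\]

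Expanding the square and integrating in $\phi$ collapses the double sum onto its lags: writing $h=k-j$ one gets $\mathcal A=\sum_{|h|\le L}\hat w(h)\sum_k\xi(k-h)\overline{\xi(k)}\,\Phi_h(k)$, where $\Phi_h(k)=\frac1{\delta r}\int_r^{r+\delta r}e^{-\gamma(k-h,r_0)-\gamma(k,r_0)}\,{\rm d}r_0$ is the radially averaged weight (the super-polynomially small tails beyond $|k-r_0|\le N$ being negligible, as in Lemma~\ref{lem3}). The band-limiting of $w$ is what confines the lags to the \emph{fixed} range $|h|\le L$, so that, once $r$ is large enough that $L\le\psi(r)$, Lemma~\ref{lem3a} applies to every contributing term and replaces $\sum_k\xi(k-h)\overline{\xi(k)}\Phi_h(k)$ by $\rho(h)\sum_k\Phi_h(k)$. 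Here the radial average does the essential work: it smears the Gaussian bump (height of order $1$, width of order $\sqrt r$) into a plateau of height of order $(\delta\sqrt r)^{-1}$ spread over all of $(r,r+\delta r)$, so that $\Phi_h(A)+\osc_{[A,B]}\Phi_h\lesssim(\delta\sqrt r)^{-1}$ with $B\lesssim_\delta r$, and Lemma~\ref{lem3a} yields a per-lag error of order $(\delta\sqrt r)^{-1}\cdot r/\psi(r)=\sqrt r/(\delta\psi(r))$. Summed over the $O_\delta(1)$ lags the whole error is $o(\sqrt r)$. A short Stirling computation as in Lemma~\ref{lem3} gives $\sum_k\Phi_h(k)=\sqrt{\pi r}\,(1+o(1))$ for each fixed $h$ (up to a harmless $\delta$-dependent constant from $\int_r^{r+\delta r}\sqrt{r_0}\,{\rm d}r_0$), so the main term equals $\sqrt{\pi r}\sum_{|h|\le L}\hat w(h)\rho(h)\,(1+o(1))$. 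Since $\rho=\widehat\mu$, this is a positive multiple of $\sqrt r\int_{\bT}w(-\phi)\,{\rm d}\mu(\phi)$; under the full-support hypothesis of Theorem~\ref{t1} the measure $\mu$ charges every arc of length $2\delta$, in particular the one governing the main term, whence $\mathcal A\gtrsim_\delta\sqrt r$.

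It remains to pass from the weighted full-circle integral to the genuine angular window. The contribution of $\{|\phi-\theta|\ge\delta\}$ to $\mathcal A$ is at most $\ep(\delta)\sum_k|\xi(k)|^2\Phi_0(k)$, and by Lemma~\ref{lem3a} with $h=0$ (again using that $\Phi_0$ has height of order $(\delta\sqrt r)^{-1}$) this equals $\ep(\delta)\bigl(\rho(0)\sqrt{\pi r}+o(\sqrt r)\bigr)\lesssim\ep(\delta)\sqrt r$; choosing $L=L(\delta)$ large enough that $\ep(\delta)$ is small against $c(\delta)$ times the relevant spectral mass makes this tail negligible against $\mathcal A$. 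Hence the window alone carries a double average $\gtrsim_\delta\sqrt r$, and since $\int_{\theta-\delta}^{\theta+\delta}|\widetilde{W}_{r_0,N}(\phi)|^2 w(\phi)\,{\rm d}\phi\le\|w\|_\infty\,2\delta\,\max_{|\phi-\theta|\le\delta}|\widetilde{W}_{r_0,N}(\phi)|^2$, the inequality $\max\ge{\rm average}$ furnishes $r_0\in(r,r+\delta r)$ and $\theta_0\in(\theta-\delta,\theta+\delta)$ with $|\widetilde{W}_{r_0,N}(\theta_0)|^2\gtrsim_\delta\sqrt r$, i.e.\ $|\widetilde{W}_{r_0,N}(\theta_0)|\gtrsim r^{1/4}$.

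The main obstacle is precisely the error control above. The Wiener hypothesis supplies the correlation estimate \eqref{15g} only with an unspecified, possibly slowly growing rate $\psi$, and a naive handling of the off-diagonal ($h\neq0$) terms is too lossy in two distinct ways: a sharp indicator window has a Fourier tail decaying only like $1/|h|$, which after summation over $|h|\lesssim N$ costs a factor $\log r$; and an un-averaged Gaussian weight of height of order $1$ makes the per-lag Lemma~\ref{lem3a} error of order $r/\psi(r)$, which swamps the main term $\sqrt r$ whenever $\psi$ grows slowly. Both defects are cured by the two smoothings acting together: band-limiting $w$ caps the number of relevant lags at $O_\delta(1)$, while averaging over the radius lowers the height of the weight by the factor $\delta r$, bringing every error down to $o(\sqrt r)$. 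Verifying that $\Phi_h$ has the asserted height and bounded oscillation, and that the Stirling approximation of $\gamma$ survives the radial average, are the routine but indispensable computations behind the estimate.
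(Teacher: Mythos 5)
Your proposal is correct in outline and rests on the same skeleton as the paper's proof: a second-moment average of $|\widetilde W|^2$ over the box $(r,r+\delta r)\times(\theta-\delta,\theta+\delta)$, expansion into lags $h$, replacement of the correlations $\xi(k)\overline{\xi(k+h)}$ by $\rho(h)$ via Lemma~\ref{lem3a}, positivity of the resulting spectral quantity from the full-support hypothesis (which, as in the paper, is not stated in the lemma but is inherited from Theorem~\ref{t1}), and finally $\max\ge$ average. Where you genuinely diverge is the choice of angular mollifier, and this is a real trade-off. The paper uses a compactly supported $C^2$ bump $g(\delta^{-1}(\varphi-\theta))$: angular localization is then exact, but $\widehat g(\delta h)$ decays only like $(\delta h)^{-2}$, so all lags $|h|\le 2N$ appear and must be truncated at a slowly growing threshold $H\le\min(\log r,\psi(r/2),1/\ep(r))$ using the crude bound of Lemma~\ref{lemma:upper_bound}; this is where the bookkeeping with $\psi$ and the monotone error $\ep(r)$ of Lemma~\ref{lemma_sum} enters. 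You instead take a band-limited non-negative kernel of degree $L=L(\delta)$, so only $O_\delta(1)$ lags survive, no truncation is needed, and Lemma~\ref{lem3a} is invoked a bounded number of times (in particular you never need uniformity in $h$ up to $\log r$, only finitely many fixed lags); the price is that a trigonometric polynomial cannot vanish off the window, so you need the extra Parseval step bounding the contribution of $\{|\varphi-\theta|\ge\delta\}$ by $\ep(\delta)$ times a quantity $\lesssim_\delta\rho(0)\sqrt r$, absorbed by taking $L$ large. Both routes work; yours is tidier on the lag side, the paper's on the angular side.

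Two caveats. First, a small slip in your kernel's specification: a continuous $w$ cannot satisfy both $w\ge c(\delta)$ on the open arc $(\theta-\delta,\theta+\delta)$ and $w\le\ep(\delta)<c(\delta)$ on the complement (consider a boundary point). Use two nested arcs instead --- $w\ge c(\delta)$ on $(\theta-\tfrac\delta2,\theta+\tfrac\delta2)$ and $w\le\ep(\delta)$ off $(\theta-\delta,\theta+\delta)$, obtained e.g.\ as a Fej\'er mean of a continuous bump (a pure Fej\'er kernel will not do, as it has zeros in the window); nothing else in your argument changes. Second, the estimates you defer as ``routine'' --- the height bound $\sup\Phi_h\lesssim(\delta\sqrt r)^{-1}$, the oscillation bound $\osc\Phi_h\lesssim(\delta\sqrt r)^{-1}$, and the asymptotics of $\sum_k\Phi_h(k)$ --- are, after multiplying by $\delta r$, precisely the content of Lemmas~\ref{lemma:upper_bound}, \ref{lemma_sum} and~\ref{lemma:osc_V_h}, which occupy the bulk of the paper's proof of Lemma~\ref{lem5}; they are true, but the oscillation bound in particular requires the Laplace-type expansion of $e^{-\gamma(t,s)-\gamma(t+h,s)}$ with separate treatment of the edge ranges of $k$, so calling them routine understates where the real work lies.
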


\begin{proof}[Beginning of the proof of Lemma~\ref{lem5}] Choose a non-negative even function $g\in C^2_0(\mathbb R)$ with support on $[-1/2, 1/2]$ and such that $\int g(\varphi)\,d\varphi=1$.
Fix $N \simeq r^{1/2}\log r$.
It suffices to verify that
$$
X=\int_r^{r+\delta r}\, \int_{-1/2}^{1/2} |\widetilde {W}_{s,N}(\varphi)|^2\,  g(\delta^{-1}(\varphi-\theta))\,{\rm d}s\,{\rm d}\varphi\gtrsim r^{3/2}.
$$

Expanding the square we get
\[
X=\delta \int_r^{r+\delta r} \!\sum_{|k_1-s|\le N}\,\, \sum_{|k_2-s|\le N}\,
\xi(k_1) \overline{\xi (k_2)}\, e((k_1-k_2)\theta)\,
\widehat{g}(\delta(k_2-k_1))\, e^{-\gamma(k_1,s)-\gamma(k_2,s)}\,{\rm d}s.
\]
Set
$$
V_h(t)=\int_r^{r(1+\delta)} \,
\done_{[t-N, t+N]}(s)\, \done_{[t+h-N, t+h+N]}(s)\,
e^{-\gamma(t,s)-\gamma(t+h,s)}\, {\rm d}s
$$
Then
\begin{equation}
X=\delta \sum_{|h|\le 2N} \ \sum_{r-N\le k\le r(1+\delta)+N} \
\xi (k)\overline{\xi (k+h)}\, e(-h\theta)
\widehat{g}(\delta h)V_h(k).
\label{sl4}
\end{equation}
To apply Lemma~\ref{lem3a}, we need to estimate the quantities
$ \max_{[A, B]} V_h $, $\osc_{[A, B]} V_h$, and
$\sum_{A\le k < B} V_h(k) $ with $A=r-N$ and $B= r(1+\delta)+N$.
This will be done next in a series of lemmas. In what follows, we always assume
that the value of $r$ (and hence of $t$) is large enough.

\subsection{Estimating the integral $V_h(t)$}
The first estimate is a crude upper bound:
\begin{lemma}\label{lemma:upper_bound}
Let $|h|\le 2N$. Then
\[
\sup_{[r-N, r(1+\delta)+N]}V_h\lesssim r^{1/2}.
\]
\end{lemma}
\begin{proof}
Fix $ r-N \le t \le r(1+\delta)+N $.
Then, for $|s-t|\le N$, we have
\[
\Bigl| \gamma (t, s) - \frac{(t-s)^2}{2t} \Bigr| \lesssim \frac{N}r + \frac{N^3}{r^2}\,
\stackrel{N \simeq \sqrt{r}\, \log r}\lesssim\,
\frac{\log^3 r}{\sqrt{r}}\,,
\]
and similarly, for $|s-(t+h)|\le N$, we have
\[
\Bigl| \gamma (t+h, s) - \frac{(t+h-s)^2}{2(t+h)} \Bigr|
\lesssim \frac{\log^3 r}{\sqrt{r}}\,.
\]
Hence,
\[
V_h(t) \lesssim \int_\bR
\exp\Bigl( -\frac{(t-s)^2}{2t} \Bigr)\, {\rm d}s
\, \stackrel{t\simeq r }\lesssim\,  \sqrt{r}\,,
\]
completing the proof. \end{proof}

\medskip
This lemma (combined with the decay of the Fourier transform
$\widehat{g}(\delta h)$) will allow us to make the sum in $h$ in~\eqref{sl4}
much shorter, cutting it from $|h|\le 2N$ to $|h|\le \log r$. Hence, it
will suffice to estimate $\sum_k V_h(k)$ and the oscillation of $V_h$
only for $|h|\le \log r$.

\medskip In what follows, we assume that $|h|\le \log r$.
Our next goal is to simplify the integrand in the definition of $V_h$.
First, we note that, for $|s-t|\le N$, we have
\[
e^{-\gamma (t, s)} = \Bigl( 1 + Q(t^{-1}, t-s) +
O\Bigl( \frac{\log^q r }{r^{3/2}} \Bigr) \Bigr)\, e^{-(t-s)^2/(2t)}\,,
\]
where $Q\in\bR[x, y]$ is a polynomial which consists of the terms $x^\ell y^m$ with
$m<2\ell$, and $q$ is a positive number. The exact form of $Q$
($Q(x, y) = - \tfrac12 xy + \tfrac18 x^2 y^2 - \tfrac13 x^2 y^3
+ \tfrac16 x^3 y^4 + \tfrac1{18} x^4 y^6$) and the value of $q$ ($q=9$)
are of no importance in our analysis. Replacing $t$ by $t+h$, we see that,
for $|s-(t+h)|\le N$, we have
\begin{multline*}
e^{-\gamma (t+h, s)} = \Bigl( 1 + Q((t+h)^{-1}, t+h-s) +
O\Bigl( \frac{\log^q r }{r^{3/2}} \Bigr) \Bigr)\, e^{-(t+h-s)^2/(2t)} \\
= \Bigl( 1 + P(h, t^{-1}, t-s) +
O\Bigl( \frac{\log^p r }{r^{3/2}} \Bigr) \Bigr)\, e^{-(t-s)^2/(2t)}\,,
\end{multline*}
where $P\in\bR[h, x, y]$ is a polynomial which consists of the terms
$h^k x^\ell y^m$ with $m < 2\ell$,
and $p$ is a positive number. Thus,
\begin{align*}
V_h(t)&=\int_r^{r(1+\delta)} \,
\done_{[t-N, t+N]}(s)\, \done_{[t+h-N, t+h+N]}(s) \times \\
\, &\quad \times
\Bigl( 1 + P_1(h, t^{-1}, t-s) + O\Bigl( \frac{\log^{p} r }{r^{3/2}} \Bigr)\Bigr)\,
e^{-(t-s)^2/t}\, {\rm d}s \\
&= \int_r^{r(1+\delta)} \,
\Bigl( 1 + P_1(h, t^{-1}, t-s) + O\Bigl( \frac{\log^{p} r }{r^{3/2}} \Bigr)\Bigr)\,
e^{-(t-s)^2/t}\, {\rm d}s + O\bigl( e^{-c\log^2 r} \bigr) \\
&=\sqrt{t}\,
\int_{(r-t)/\sqrt{t}}^{(r(1+\delta)-t)/\sqrt{t}}\,
\bigl( 1 + P_1(h, t^{-1}, -u\sqrt{t}) \bigr)\,
e^{-u^2}\, {\rm d}u + O\Bigl( \frac{\log^{p} r }r \Bigr)
\quad (s=t+u\sqrt{t})\,.
\end{align*}
Here, $P_1\in\bR[h, x, y]$ is a polynomial of the same structure as $P$, and the positive
integer value $p$ may vary from line to line.

\medskip
In what follows, we will separate three ranges of values of $t$: the central part
$r+N \le t \le r(1+\delta)-N$, and the edges: $r-N \le t \le r+N$ and $r(1+\delta)-N \le t
\le r(1+\delta)+N $.

\medskip\noindent\underline{$ r+N \le t \le r(1+\delta)-N$}: Then,
\[
[-c\log r, c \log r] \subset [(r-t)/\sqrt{t}, (r(1+\delta)-t)/\sqrt{t}]\,,
\]
so we can replace the integration over the interval
$[(r-t)/\sqrt{t}, (r(1+\delta)-t)/\sqrt{t}]$ by the integration over the whole real axis,
the error we make is $O\bigl( e^{-c\log^2 r}\bigr)$.
We immediately conclude that
\begin{equation}\label{eq:V_h_asymptotics}
V_h(t) = (\sqrt{\pi}+o(1))\sqrt{t}, \qquad r+N \le t \le r(1+\delta)-N,\, |h|\le \log r,\,r\to \infty\,.
\end{equation}
This yields
\begin{lemma}\label{lemma_sum}
Let $|h|\le \log r$.
Then
\[
\sum_{r-N \le k < r(1+\delta)+N}\, V_h(k) = (A_\delta + \ep (r) )r^{3/2}\,,
\qquad r\to\infty\,,
\]
with $A_\delta=\frac23\sqrt{\pi}\bigl((1+\delta)^{3/2}-1\bigr)$, and $\ep (r)$
monotonically decreasing to $0$.
\end{lemma}

\begin{proof}
We split the sum into three parts:
\begin{multline*}
\sum_{r-N \le k < r(1+\delta)+N}\, V_h(k) \\
=
\Bigl( \, \sum_{r-N \le k < r+N}\,
+ \, \sum_{r+N \le k < r(1+\delta)-N}\,
+ \, \sum_{r(1+\delta)-N < k < r(1+\delta)+N}\, \Bigr) V_h(k)\,.
\end{multline*}
By Lemma~\ref{lemma:upper_bound}, the first and the third sum are
$  \lesssim r^{1/2} N \lesssim r \log r $. In the central sum we
use the asymptotics~\eqref{eq:V_h_asymptotics}. \end{proof}

\medskip Next, we will show that
\begin{equation}\label{eq:osc1}
\sum_{r+N \le k \le r(1+\delta)-N} |V_h(k+1)-V_h(k)| \lesssim \sqrt{r}.
\end{equation}
To this end, we will re-write the asymptotics~\eqref{eq:V_h_asymptotics} with a more accurate
error term. We have
\begin{multline*}
V_h(t) = \sqrt{t}\, \int_{-\infty}^\infty
\Bigl( 1 +\, \triplesum_{k, \ell, m\colon m<2\ell}\, C_{k, \ell, m} h^k t^{m/2-\ell} u^m
\Bigr)\, e^{-u^2}\, {\rm d}u + O\Bigl( \frac{\log^p r}r \Bigr) \\
= \sqrt{\pi t} + \triplesum_{k, \ell, m\colon m < 2\ell}\,
C'_{k, \ell, m} h^k t^{(m+1)/2-\ell} + O\Bigl( \frac{\log^p r}r \Bigr)\,.
\end{multline*}
Each term $f(t)$ in the triple sum on the RHS satisfies
\[
\max_{[r+N, r(1+\delta)-N]}\ |f'| = O_\ep (r^{-3/2+\ep})\,, \qquad r\to\infty\,,
\]
which yields~\eqref{eq:osc1}.

\medskip It remains to prove counterparts of the estimate~\eqref{eq:osc1} at the edges of the range of $t$.

\medskip\noindent\underline{$r - N \le t \le r + N$}:
In this case, extending the upper limit of the integrals to $+\infty$, we get
\begin{multline*}
V_h(t)
= \sqrt{t}\, \int_{(r-t)/\sqrt{t}}^\infty\, e^{-u^2}\, {\rm d}u \, \\
+ \, \triplesum_{k, \ell, m\colon m < 2\ell}\, C_{k, \ell, m}
h^k t^{(m+1)/2-\ell} \int_{(r-t)/\sqrt{t}}^\infty\, u^m e^{-u^2}\, {\rm d}u \,
+ \, O\Bigl( \frac{\log^p r}r \Bigr)\,.
\end{multline*}
Observe that the function $t\mapsto (r-t)/\sqrt{t}$ decreases. Therefore,
the first term on the RHS increases with $t$ and satisfies
\[
\sum_{r - N \le k \le r + N} |f(k)-f(k-1)| \le f(r+N) \lesssim \sqrt{r}\,.
\]
A similarly straightforward inspection shows that for other terms we have the even 
better estimate
\[
\sum_{r - N \le k \le r + N} |f(k)-f(k-1)| \lesssim \log^p r\,.
\]
We conclude that
\begin{equation}\label{eq_osc2}
\sum_{r - N \le k \le r + N} |V_h(k)-V_h(k-1)| \lesssim \sqrt{r}\,.
\end{equation}

\medskip\noindent\underline{$r(1+\delta)-N \le t \le r(1+\delta)+N$}:
In this case, extending the lower limit of the integrals to $-\infty$, we have
\begin{multline*}
V_h(t)
= \sqrt{t}\, 
\Bigl(\, \int_{-\infty}^\infty - \int_{(r(1+\delta)-t)/\sqrt{t}}^\infty\, \Bigr)
e^{-u^2}\, {\rm d}u \, \\
+ \, \triplesum_{k, \ell, m\colon m < 2\ell}\, C_{k, \ell, m}
h^k t^{(m+1)/2-\ell} 
\Bigl( \int_{-\infty}^\infty - 
\int_{(r(1+\delta)-t)/\sqrt{t}}^\infty\, \Bigr) 
u^m e^{-u^2}\, {\rm d}u \, + \, O\Bigl( \frac{\log^p r}r \Bigr) 
\end{multline*}
which, similarly to the previous case, yields
\begin{equation}\label{eq_osc3}
\sum_{r(1+\delta) - N \le k \le r(1+\delta) + N} |V_h(k)-V_h(k-1)| \lesssim \sqrt{r}\,.
\end{equation}

\medskip Combining estimates~\eqref{eq:osc1}, ~\eqref{eq_osc2}, and~\eqref{eq_osc3},
we obtain

\begin{lemma}\label{lemma:osc_V_h}
Let $|h|\le \log r$.
Then
\[
\sum_{r-N \le k < r(1+\delta)+N}\, | V_h(k) - V_h(k-1)| \lesssim \sqrt{r}\,.
\]
\end{lemma}

\subsection{Completing the proof of the lower bound for $\widetilde{W}$}

Let us return to the proof of Lemma~\ref{lem5}.
We use that for $|h|\le 2N$,
\begin{multline}
\, \sum_{r-N\le k\le r(1+\delta)+N}\,
|\xi (k)\overline{\xi (k+h)}| \\
\le
\Bigl( \, \sum_{r-N\le k\le r(1+\delta)+N}\,
|\xi (k)|^2\Bigr)^{1/2} \cdot \Bigl(\,
\sum_{r-N\le k\le r(1+\delta)+N}\, |\xi (k+h)|\Bigr)^{1/2}\lesssim r\,.
\label{sl6}
\end{multline}

Choose a large $H \le \min( \log r, \psi(r/2), 1/\ep(r))$ with the functions $\psi$ and $\ep$ as in Lemma~\ref{lem2} and Lemma~\ref{lemma_sum}. By \eqref{sl4} we have that 
\begin{multline*}
\Bigl|X-\delta\sum_{|h|\le H}\,\,\sum_{r-N\le k\le r(1+\delta)+N}\,
\xi (k)\overline{\xi (k+h)}e(-h\theta)
\widehat{g}(\delta h)V_h(k)\Bigr|\\
\le \delta \sum_{H<|h|\le 2N}\,\,\sum_{r-N\le k\le r(1+\delta)+N}\,
|\xi (k)\overline{\xi (k+h)}|\cdot
|\widehat{g}(\delta h)|\cdot V_h(k).
\end{multline*}
Since $|\widehat{g}(\delta h)|\lesssim (\delta h)^{-2}$,
using Lemma~\ref{lemma:upper_bound} and estimate~\eqref{sl6}, we obtain the bound 
\begin{multline*}
\Bigl|X-\delta\sum_{|h|\le H}\,\,\sum_{r-N\le k\le r(1+\delta)+N}
\xi (k)\overline{\xi (k+h)}e(-h\theta)
\widehat{g}(\delta h)V_h(k)\Bigr| \\
\lesssim \delta\sum_{H<|h|\le 2N}\frac{r^{3/2}}{(\delta h)^2}
\simeq \frac{r^{3/2}}{\delta H}\,.
\end{multline*}
Next, by Lemma~\ref{lem3a}, we have that 
\begin{multline*}
\Bigl|\, \sum_{r-N\le k \le r(1+\delta)+N}\, \xi (k) \overline{\xi (k+h)}\, V_h(k)
- \rho(h) \, \sum_{r-N\le k \le r(1+\delta)+N}\, V_h(k)
\, \Bigr| \\
\lesssim \frac{r}{\psi(r/2)} \cdot \osc_{[r-N, r(1+\delta)+N]}\, V_h
\end{multline*}
By Lemma~\ref{lemma:osc_V_h}, the RHS is
\[
\lesssim \frac{r^{3/2}}{\psi(r/2)}\,.
\]
Thus, recalling Lemma~\ref{lemma_sum}, we get that 
\[
\Bigl|\, \sum_{r-N\le k \le r(1+\delta)+N}\, \xi (k) \overline{\xi (k+h)}\, V_h(k)
- A_\delta \rho(h) r^{3/2}\, \Bigr| \lesssim
\Bigl(\, \ep (r) + \frac1{\psi(r/2)}\, \Bigr)\, r^{3/2}\,,
\]
whence, using once again that $|\widehat{g}(\delta h)|\lesssim (\delta h)^{-2}$,
we obtain the estimate 
$$
\Bigl| X-\delta A_\delta r^{3/2}\,
\sum_{|h|\le H}\rho(h)e(-h\theta)\widehat{g}(\delta h)\, \Bigr|
=O\Bigl(\, \frac{r^{3/2}}{\delta H} \, \Bigr)\,, \qquad r\to\infty.
$$
Arguing as in the proof of Lemma~7.3.1 of \cite{BNS}, we use that
$$
\sum_{h\in\mathbb Z}\rho(h)e(-h\theta)
\widehat{g}(\delta h)
$$
is the density of the convolution  $\mu \ast g_\delta$ at the point $-\theta$, where
$g_\delta(\varphi) =\delta^{-1} g(\delta^{-1}\varphi)$, $ |\varphi|\le 1$.
Since the support of $\mu$ is the whole circle and the function $g$ is non-negative, this value is positive.
Hence, for some $c=c(\delta)>0$, we have $X=(c+o(1))r^{3/2}$, $r\to\infty$, and
the lemma is proved.
\end{proof}

\medskip
Theorem~\ref{t1} follows now from Lemmas~\ref{lem2}, \ref{lem3}, and \ref{lem5}.
\hfill $\Box$

\section{Examples to Theorem~\ref{t1}}\label{sect:examples}

\subsection{Ergodic stationary processes on $\bZ$}
Let $\xi\colon \bZ_+\to\bC$ be a stationary process on $\bZ$ whose elements have
finite second moment. Then the sequence $m\mapsto \bE\bigl[ \xi(0)\overline{\xi(m)}\bigr]$
is positive-definite and may therefore be expressed as the Fourier transform of a non-negative measure
$\mu$ on the unit circle, called {\em the spectral measure} of the process $\xi$.
By the Birkhoff ergodic theorem, almost surely, the limits
\begin{equation}\label{eq:Birkhoff}
\lim_{n\to\infty} \frac1n\, \sum_{0\le s < n} \xi(s)\overline{\xi(s+k)}
\end{equation}
exist for every $k$, that is, almost every realization of $\xi$ is a Wiener sequence.
Generally speaking, the spectral measure of a realization, as well as its closed
support, are random, but if the process $\xi$ is {\em ergodic}, then the limits~\eqref{eq:Birkhoff} are not random and coincide with the Fourier coefficients
$\widehat{\mu}(k)$. That is, for ergodic stationary processes on $\mathbb Z$, for
almost every realization, its spectral measure coincides with the spectral measure of
the processes, cf.~\cite[Section~2]{BSW}. Combining this discussion with our Theorem~1,
we obtain

\begin{corollary}\label{cor:stationary_ergodic}
Let $\xi$ be an ergodic stationary process on $\bZ$ with no gaps in its spectrum.
Then, almost surely, $\xi$ is an $L$-sequence.
\end{corollary}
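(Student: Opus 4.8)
The plan is to read this off Theorem~\ref{t1}: I will show that, on an event of full probability, a realization of $\xi$ is a Wiener sequence whose spectral measure is exactly the spectral measure $\mu$ of the process, so that its closed support is the whole unit circle by hypothesis, and Theorem~\ref{t1} then applies directly.

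First I would present the process in the usual form $\xi(s)=\Phi(T^s\omega)$, where $T$ is an ergodic measure-preserving transformation of the underlying probability space and $\Phi\in L^2$ (this is the finite second moment assumption). For a fixed $k\ge 0$ put $g_k(\omega)=\Phi(\omega)\overline{\Phi(T^k\omega)}$; by Cauchy--Schwarz $g_k\in L^1$, so the Birkhoff ergodic theorem gives, almost surely,
\begin{multline*}
\frac1n \sum_{0\le s<n} \xi(s)\overline{\xi(s+k)}
= \frac1n \sum_{0\le s<n} g_k(T^s\omega) \\
\longrightarrow \bE[g_k]
= \bE\bigl[\xi(0)\overline{\xi(k)}\bigr]
= \widehat\mu(k), \qquad n\to\infty.
\end{multline*}
Since $k$ ranges over a countable set, the union of the associated exceptional null sets is again null; hence on a single event $\Omega_0$ of full probability the limit $\rho(k)$ exists for every $k$ and equals $\widehat\mu(k)$. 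In particular, every realization $\omega\in\Omega_0$ is a Wiener sequence.

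Next I would invoke the uniqueness built into the Herglotz construction recalled before Theorem~\ref{t1}: the spectral measure of a Wiener sequence is determined by its autocorrelations $\rho(k)$. On $\Omega_0$ we have $\rho(k)=\widehat\mu(k)$ for all $k$, so the spectral measure of the realization is $\mu$ itself; by hypothesis its closed support is the whole unit circle. Applying Theorem~\ref{t1} to each such realization yields that $\xi$ is an $L$-sequence for every $\omega\in\Omega_0$, that is, almost surely.

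The only conceptually delicate point---and the step where the ergodicity hypothesis is indispensable---is the passage from the spectral condition on the \emph{process} to the same condition on \emph{individual} realizations. For a general stationary process the Birkhoff limits $\rho(k)$ still exist almost surely, but they equal the conditional expectations $\bE[g_k\mid\mathcal I]$ relative to the invariant $\sigma$-algebra $\mathcal I$ and are therefore genuinely random; the realization spectral measures could then differ from $\mu$ and might even acquire gaps. Ergodicity forces $\mathcal I$ to be trivial, collapsing these conditional expectations to the constants $\widehat\mu(k)$, and it is exactly this that transports the full-support assumption down to almost every trajectory.
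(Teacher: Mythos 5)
Your proof is correct and follows essentially the same route as the paper: Birkhoff's ergodic theorem applied to the correlation sums $\tfrac1n\sum_{0\le s<n}\xi(s)\overline{\xi(s+k)}$, ergodicity identifying the almost-sure limits with $\widehat{\mu}(k)$ (hence the realization's spectral measure with $\mu$), and then a direct application of Theorem~\ref{t1}. The additional details you supply --- the canonical representation $\xi(s)=\Phi(T^s\omega)$, the countable union of exceptional null sets, and the remark on the invariant $\sigma$-algebra --- merely make explicit what the paper leaves implicit.
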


This corollary removes unnecessary assumptions in Theorem~3 from~\cite{BNS} where it was assumed that the sequence $\xi$ is bounded and has strong mixing properties.

It is worth mentioning that, as follows from~\cite[Theorem~4]{BNS}, for
Gaussian stationary processes on $\bZ$, Corollary~\ref{cor:stationary_ergodic} holds
{\em without} the ergodicity assumption.

\subsection{Besicovitch almost-periodic sequences}

For functions (sequences) $s\colon \bZ_+\to \bC$, define a semi-norm 
\[
\| \xi \|^2 = \limsup_{n\to\infty} \frac1n\, \sum_{0\le s < n} |\xi(s)|^2\,.
\]
Two sequences $\xi$ and $\widetilde\xi$ are said to be {\em equivalent} if
$\| \xi - \widetilde\xi \|=0$.
For instance, any sequence $\xi$ with $\xi(n) = o(1)$ as $n\to\infty$,
is equivalent to the zero function.
By $\ell^2$ we denote the linear space of the equivalence classes equipped with
the norm $\|\cdot\|$. One can show that this space is complete.

By $\mathcal P$ we denote the linear hull of the exponentials, i.e.,
the elements of $\mathcal P$  are the finite linear combinations of exponentials
\[
P(n) = \sum_{\lambda\in\Lambda} c_\lambda e_\lambda (n)\,,
\]
where $e_\lambda (n) = e(\lambda n)$,
and $\Lambda\subset\bT$ is a finite set, {\em the spectrum of $P$}.

\begin{definition}\label{def:Besicovitch}
The Besicovitch space $B^2$ is the closure of $\mathcal P$ in $\ell^2$.
\end{definition}

We say that a sequence $\xi\colon \bZ_+\to\bC$ has a mean if the following
limit exists (and is finite):
\[
\mathsf M(\xi) = \lim_{n\to\infty} \frac1n\, \sum_{0\le s<n} \xi(s)\,.
\]
The following two facts are relatively straightforward:
\begin{enumerate}
\item For any $\xi\in B^2$ and any $\lambda\in\bT$, the Fourier coefficient of $\xi$ at $\lambda$, 
i.e. 
$ \widehat{\xi}(\lambda)=\mathsf M(\xi e_{-\lambda})$ is well defined.
\item For any $\xi, \eta \in B^2$, the mean $\mathsf M(\xi\,\overline{\eta})$
exists. It defines the scalar product
$ \langle \xi, \eta \rangle = \mathsf M(\xi\,\overline{\eta})$ in $B^2$.
Hence, the Cauchy--Schwartz inequality
$|\langle \xi, \eta \rangle | \le \|\xi\| \cdot \|\eta \| $ holds for every
$\xi, \eta \in B^2$.
\end{enumerate}
Noting that the translations $\xi_m(n)=\xi(n+m)$, $m\in \bN$,
preserve $B^2$, we conclude that {\em any $B^2$-sequence is a Wiener sequence}.
Furthermore,
\begin{enumerate}
\item[3.] The set $\Lambda_\xi = \{\lambda\colon \widehat{\xi}(\lambda)\ne 0\}$ (called
{\em the spectrum} of~$\xi$) is countable, and the Parseval identity
$ \| \xi \|^2  = \sum_{\lambda\in\Lambda_\xi} |\widehat{\xi}(\lambda)|^2 $ 
holds.
\end{enumerate}
Then, the spectral measure $\mu$ of $\xi$ is nothing but
$ \sum_{\lambda\in\Lambda_\xi} |\widehat{\xi}(\lambda)|^2\, \delta_\lambda $, where $\delta_\lambda$ is the unit point mass at $\lambda$. 

Combining these preliminaries\footnote{
The reader will find more details, for instance, in~\cite[Sections 2.6, 3.4, 4.2 ]{Cord} (where the Besicovitch 
almost-periodic functions on $\bR$ are treated)
as well as in~\cite[Section~3]{Bellow}. 
} with Theorem~1, we arrive at

\begin{corollary}\label{cor:Besicovitch}
Any $B^2$-sequence whose spectrum is dense in the unit circle
is an $L$-sequence.
\end{corollary}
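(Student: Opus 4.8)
The plan is simply to verify the single hypothesis of Theorem~\ref{t1}, namely that the closed support of the spectral measure of $\xi$ is the whole unit circle, and then to quote that theorem. In fact the preparatory facts recorded just before the statement already carry almost the entire argument. First I would invoke the remark that the translations $\xi_m(n)=\xi(n+m)$ preserve $B^2$, so that every $B^2$-sequence is a Wiener sequence, together with fact~(3) and the computation following it, which identify the spectral measure of $\xi$ as the purely atomic measure
\[
\mu = \sum_{\lambda\in\Lambda_\xi} |\widehat{\xi}(\lambda)|^2\, \delta_\lambda\,,
\]
carried by the spectrum $\Lambda_\xi = \{\lambda\colon \widehat{\xi}(\lambda)\ne 0\}$, where by the very definition of $\Lambda_\xi$ every weight $|\widehat{\xi}(\lambda)|^2$ is strictly positive.

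The only remaining point is to determine the closed support of this $\mu$. For a purely atomic measure $\sum_\lambda a_\lambda\delta_\lambda$ with all weights $a_\lambda>0$, a point $z\in\bT$ belongs to the support exactly when every open arc about $z$ carries positive mass; since the total mass is concentrated on the atoms, this happens precisely when $z$ is a limit of points of $\Lambda_\xi$. Hence $\operatorname{supp}\mu = \overline{\Lambda_\xi}$. By hypothesis the spectrum $\Lambda_\xi$ is dense in the unit circle, so $\overline{\Lambda_\xi}=\bT$, and the closed support of $\mu$ is the whole circle, as required.

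With this verified, Theorem~\ref{t1} applies verbatim and yields that $\xi$ is an $L$-sequence, completing the proof. I do not expect any genuine obstacle beyond the preparatory material: the one step that calls for (minimal) care is the elementary identification of the support of an atomic measure with the closure of its set of atoms, which uses only the positivity of the individual weights and \emph{no} quantitative information about their sizes. In particular, density of the spectrum as a subset of $\bT$ is exactly the statement that its closure is all of $\bT$, so the hypothesis of the corollary is tailored precisely to match the support condition in Theorem~\ref{t1}.
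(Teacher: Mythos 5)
Your proposal is correct and follows exactly the route the paper takes: combine the preparatory facts (every $B^2$-sequence is a Wiener sequence with spectral measure $\sum_{\lambda\in\Lambda_\xi}|\widehat{\xi}(\lambda)|^2\delta_\lambda$) with Theorem~\ref{t1}, observing that density of $\Lambda_\xi$ forces the closed support of this atomic measure to be all of $\bT$. The only difference is that you spell out explicitly the elementary identification $\operatorname{supp}\mu=\overline{\Lambda_\xi}$, which the paper leaves implicit.
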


If the sequence $\xi$ is uniformly almost-periodic, then the function
$F_\xi$ has a completely regular growth in the sense of Levin-Pfluger with the
conjugated indicator diagram being the convex hull of the spectrum $\Lambda_\xi$.
This is a theorem of Levin~\cite[Section~VI.5]{Levin} (for a relatively self-contained
proof see~\cite[Theorem~5]{BNS}). If the spectrum $\Lambda_\xi$ is dense in $\bT$, then
the conjugated indicator diagram is the closed unit disk. In this special case, Corollary~\ref{cor:Besicovitch} provides a much stronger result\footnote{
For instance, uniform almost-periodic sequences cannot attain finitely many values, while
there are plenty of $B^2$-sequences that attain only finitely many values and have a spectrum 
that is dense in $\bT$. One of the simplest examples is, probably, $\xi (n) = (-1)^{[\alpha n]}$ with irrational $\alpha$.
}.
On the other hand, it is not difficult to construct a bounded $B^2$-sequence $\xi$ such that
the closure $\overline{\Lambda_\xi}\subsetneqq \bT$, but still the indicator diagram
of $F_\xi$ is the closed unit disk (i.e., the Phragm\'en-Lindel\"of indicator
$h_{F_\xi} \equiv 1$). This leaves no hope to extend the general case of Levin's theorem to $B^2$-sequences.

\subsection{Unimodular pseudo-random sequences}

\subsubsection{}

\begin{corollary}
Let $(a_n)_{n\ge 1}$ be a sequence of positive integers such that for every $k\ge 1$, the sequence $(a_{n+k}-a_n)_{n\ge 1}$ consists of distinct numbers. Then for almost every real number $x$, the sequence
$(e(a_nx))_{n\ge 1}$ is a Wiener sequence, its spectral measure being Lebesgue measure on the unit circle, and, hence, this sequence is an $L$-sequence.

In particular, for almost every real number $t$ and for every natural number $a\ge 2$, the sequence $(e(a^nt))_{n\ge 1}$ is an $L$-sequence.
\end{corollary}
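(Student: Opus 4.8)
The plan is to show that for almost every $x\in\bR$ the sequence $\xi(n)=e(a_n x)$ is a Wiener sequence whose correlation coefficients $\rho(k)$ are exactly the Fourier coefficients of normalized Lebesgue measure on the unit circle, that is, $\rho(0)=1$ and $\rho(k)=0$ for every $k\ne 0$. Since the closed support of Lebesgue measure is the whole circle, Theorem~\ref{t1} will then immediately furnish the $L$-sequence property. The computation rests on the identity
\[
\xi(s)\,\overline{\xi(s+k)}=e\bigl((a_s-a_{s+k})x\bigr)\,,
\]
which for $k=0$ reduces every summand to $1$, giving $\rho(0)=1$ for all $x$; by the conjugate symmetry $\rho(-k)=\overline{\rho(k)}$ it therefore suffices to treat $k\ge 1$, where the relevant averages are governed by the exponential sums
\[
S_n(x)=\sum_{0\le s<n} e\bigl(-b_s x\bigr)\,,\qquad b_s:=a_{s+k}-a_s\,.
\]
By hypothesis the integers $b_s$ (with $k$ fixed) are pairwise distinct.

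The key step is an $L^2$ bound. Integrating $|S_n|^2$ over one period and using the orthogonality of the characters $x\mapsto e(-b_s x)$, the pairwise distinctness of the $b_s$ collapses the double sum to its diagonal, yielding $\int_0^1 |S_n(x)|^2\,{\rm d}x=n$. This already gives $n^{-1}S_n\to 0$ in $L^2$, but we need convergence almost everywhere. To upgrade it I would pass to the subsequence $n=j^2$: then $\int_0^1 |j^{-2}S_{j^2}(x)|^2\,{\rm d}x=j^{-2}$, which is summable in $j$, so by Tonelli's theorem $\sum_j |j^{-2}S_{j^2}(x)|^2<\infty$, and hence $j^{-2}S_{j^2}(x)\to 0$, for almost every $x$. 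To fill the gaps between consecutive squares, note that for $j^2\le n<(j+1)^2$ the trivial per-term bound gives $|S_n-S_{j^2}|\le (j+1)^2-j^2=2j+1$; dividing by $n\ge j^2$ and combining with the convergence along squares shows $n^{-1}S_n(x)\to 0$, i.e. $\rho(k)=0$, for almost every $x$, with a null exceptional set depending on $k$.

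Since there are only countably many $k\ge 1$, removing the union of these null sets leaves a single set of full measure on which $\rho(k)$ exists and equals $0$ for all $k\ne 0$ (with $\rho(0)=1$). For such $x$ the sequence $\xi$ is a Wiener sequence whose spectral measure is normalized Lebesgue measure, and Theorem~\ref{t1} applies. Finally, for the special case $a_n=a^n$ with $a\ge 2$ I would merely verify the hypothesis: here $b_s=a^{s+k}-a^s=a^s(a^k-1)$, and since $a^k-1$ is a fixed positive integer while $s\mapsto a^s$ is strictly increasing, the values $b_s$ are pairwise distinct for each fixed $k$, so the general statement indeed covers it.

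The step I expect to be the main obstacle is precisely the passage from mean-square to almost-everywhere convergence of $n^{-1}S_n$; the square-subsequence-plus-interpolation device sketched above (a classical Davenport--Erd\H{o}s--LeVeque type argument) is what overcomes it, and it works only because the modulus-one summands make the short blocks between consecutive squares negligible after division by $n$.
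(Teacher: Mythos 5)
Your proof is correct, and its first step coincides exactly with the paper's: fix $k$, write $\xi(s)\overline{\xi(s+k)}=e(-(a_{s+k}-a_s)x)$, and observe that the integers $b_s=a_{s+k}-a_s$ are pairwise distinct by hypothesis, so everything reduces to showing $\sum_{0\le s<n}e(-b_sx)=o(n)$ for almost every $x$, for each fixed $k$, after which a countable union of null sets and Theorem~\ref{t1} finish the job. The only divergence is in how that exponential-sum estimate is obtained: the paper simply invokes the classical theorem of Weyl on almost-everywhere equidistribution of $(b_nx)$ for distinct integers $b_n$ (citing Kuipers--Niederreiter), whereas you prove the needed special case from scratch via the orthogonality identity $\int_0^1|S_n|^2\,{\rm d}x=n$, summability along $n=j^2$, and the trivial interpolation $|S_n-S_{j^2}|\le 2j+1$ between consecutive squares. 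Your argument is in fact the standard proof of that theorem of Weyl (in the mild form needed here --- you only require the $o(n)$ bound for the single frequency $m=1$, not full equidistribution), so nothing is lost; what you gain is a self-contained treatment, at the cost of about a page where the paper spends one citation. Both routes are sound; your handling of the special case $a_n=a^n$, where $b_s=a^s(a^k-1)$ is strictly increasing in $s$, also matches what the paper intends.
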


Indeed, we fix $k\ge 1$, consider the sum
\[
\sum_{0\le s\le n} e(a_s x)\cdot \overline{e(a_{s+k}x)} =
\sum_{0\le s \le n} e((a_s-a_{s+k})x)\,,
\]
and apply the classical theorem of Weyl
(see, for instance,~\cite[Theorem~4.1 in Chapter~1]{KN}),
which says that, given a sequence $(b_n)$ of distinct integers (in our case,
$b_n=a_n-a_{n+k}$), for almost every real number $x$,
\[
\sum_{0\le s \le n} e(b_s x) = o(n)\,, \qquad n\to\infty\,.
\]

\subsubsection{}

\begin{corollary}
Let $\alpha>0$, $\beta>1$, $\beta\not\in\mathbb N$. Then the sequence
$(e(\alpha n^\beta))_{n\ge 1}$ is a Wiener sequence, its spectral measure being Lebesgue measure on the unit circle, and, hence, this sequence is an $L$-sequence.
\end{corollary}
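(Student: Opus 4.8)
The plan is to derive the corollary from Theorem~\ref{t1}. To do so I must show that $\xi(n)=e(\alpha n^\beta)$ is a Wiener sequence whose spectral measure is the (normalized) Lebesgue measure on $\bT$; since the Fourier coefficients of Lebesgue measure vanish off the origin and its closed support is all of $\bT$, Theorem~\ref{t1} then applies directly. Concretely, I need to show that for every $k\ge 0$ the limit $\rho(k)=\lim_{n\to\infty}\frac1n\sum_{0\le s<n}\xi(s)\overline{\xi(s+k)}$ exists and equals $\delta_{k,0}$. For $k=0$ this is immediate, since $|\xi(s)|=1$ makes the averaged sum identically $1$. For $k\ge 1$, writing $\xi(s)\overline{\xi(s+k)}=e\bigl(-\alpha\,\Delta_k(s)\bigr)$ with $\Delta_k(s)=(s+k)^\beta-s^\beta$, the claim reduces to the estimate
\[
\frac1n\sum_{0\le s<n} e\bigl(-\alpha\,\Delta_k(s)\bigr)=o(1),\qquad n\to\infty.
\]

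The second step is to prove this estimate via equidistribution. The binomial expansion gives
\[
\Delta_k(s)=\beta k\, s^{\beta-1}+\binom{\beta}{2}k^2 s^{\beta-2}+\cdots,
\]
so the leading term is a nonzero constant times $s^{\beta-1}$ with a \emph{non-integer} exponent $\beta-1>0$ (the non-integrality being exactly the hypothesis $\beta\notin\bN$). By the classical theory of uniform distribution (see~\cite[Chapter~1]{KN}), the sequence $s\mapsto \alpha\,\Delta_k(s)$ is then equidistributed modulo one, and Weyl's criterion applied with frequency $m=-1$ yields precisely $\frac1n\sum_{s<n}e(-\alpha\Delta_k(s))\to 0$. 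Hence $\rho(k)=0$ for all $k\ge 1$, so $\widehat\mu(k)=\delta_{k,0}$, the spectral measure $\mu$ is Lebesgue measure, its closed support is all of $\bT$, and Theorem~\ref{t1} finishes the argument.

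The main obstacle is establishing the equidistribution of $(\alpha\,\Delta_k(s))$ uniformly across the whole range $\beta>1$, $\beta\notin\bN$. When $1<\beta<2$ the derivative $\frac{d}{ds}(\alpha\Delta_k(s))=\alpha\beta\bigl((s+k)^{\beta-1}-s^{\beta-1}\bigr)$ decreases monotonically to zero (by concavity of $x\mapsto x^{\beta-1}$) while $s\cdot\frac{d}{ds}(\alpha\Delta_k(s))\asymp s^{\beta-1}\to\infty$, so equidistribution is immediate from Fej\'er's theorem~\cite[Chapter~1]{KN}. For $\beta\ge 2$ this first derivative grows and Fej\'er's theorem fails, so I would instead invoke van der Corput's method: taking $q=\lfloor\beta\rfloor\ge 2$ one has $\Delta_k^{(q)}(s)\asymp s^{\{\beta\}-1}\to 0$ (the leading term $\beta k\,s^{\beta-1}$ dominating all lower-order contributions after $q$ differentiations), whence a dyadic splitting of the range $0\le s<n$ together with the $q$-th derivative test bounds the exponential sum on each block $[M,2M]$ by $o(M)$, and summation over the $O(\log n)$ blocks gives $o(n)$. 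Pinning down the precise exponents in the $q$-th derivative test is the only genuinely technical point, but it is routine once the size of $\Delta_k^{(q)}$ is known.
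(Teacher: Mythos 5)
Your proposal is correct and follows essentially the same route as the paper: both reduce the corollary to showing that the Weyl sums $\frac1n\sum_{0\le s<n}e\bigl(\alpha\,((s+k)^\beta-s^\beta)\bigr)$ vanish asymptotically for each fixed $k\ge 1$, and both split into the ranges $1<\beta<2$ and $\beta>2$, treated by classical van der Corput--type estimates. The only differences are cosmetic: for $1<\beta<2$ the paper uses the second-derivative test (Theorem~2.7 in Chapter~1 of \cite{KN}) where you invoke Fej\'er's theorem, and for $\beta>2$ the paper applies the $q$-th derivative test \cite[Theorem~5.13]{Tit} once over the interval $[n^{3/4},n]$ (bounding the initial segment trivially) where you decompose dyadically --- both implementations give the same power-saving bounds.
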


For $1<\beta\le \tfrac32$, this follows from a result of Chen and Littlewood~\cite{CL}; 
for $\beta\ge \tfrac32$, $\beta\notin \bN$, this was proven in~\cite[Theorem~2]{BNS}.
The techniques used in these cases were quite different and the case
$1<\beta \le \tfrac32$ required a rather elaborate argument\footnote{
Note that Chen and Littlewood proved a much finer result: they found
an asymptotic location of zeroes of the entire function $F_\xi$.
}. Theorem~\ref{t1} combined with classical van der Corput estimates
of exponential sums allows us to treat both cases in a simple and uniform
manner.

Let us also mention that for $0<\beta<1$, the sequence $(e(\alpha n^\beta))_{n\ge 1}$ is a Wiener sequence, its spectral measure being the unit point mass at $1$. 
It is not difficult to show that the corresponding entire function $F_\xi$ has the Phragm\'en--Lindel\"of indicator  $ h_{F_\xi}(\theta) = \cos_+ \theta $ (cf. \cite[Theorem 2]{EO}), which is obviously incompatible with $\xi$  
being an $L$-sequence.

\begin{proof} We need to show that for every $m\ge 1$,
$$
\lim_{n\to\infty}\frac1n\sum_{0\le s< n}e(\alpha (s+m)^\beta-\alpha s^\beta)=0.
$$
Set $\phi(t)=\alpha (t+m)^\beta-\alpha t^\beta$.

For $1<\beta<2$ we apply a van~der~Corput estimate~\cite[Theorem~2.7 in Chapter~1]{KN}.
We have
$$
\Bigl|\frac1n\sum_{0\le s< n}e(\phi(s))\Bigr|\lesssim \frac1{n|\phi''(n)|^{1/2}} \simeq n^{(1-\beta)/2},\qquad n\to\infty.
$$

For $\beta>2$ we use another, more elaborate,
van~der~Corput bound~\cite[Theorem~5.13]{Tit}.
Choose an integer $k\ge 2$ such that $k<\beta<k+1$ and fix $M=2^k$.
Since
$$
n^{\beta-k-1}\lesssim \phi^{(k)}(t)\lesssim n^{(k+1-\beta)/4}n^{\beta-k-1},\qquad n^{3/4}\le t\le n,
$$
we have
\begin{multline*}
\Bigl|\sum_{0\le s< n}e(\phi(s))\Bigr|\le n^{3/4}+\Bigl|\sum_{n^{3/4}\le s< n}e(\phi(s))\Bigr|\\ \lesssim
n^{3/4}+n^{(k+1-\beta)/M}\cdot n\cdot n^{(\beta-k-1)/(M-2)}+n^{1-(4/M)}\cdot  n^{(k+1-\beta)/(M-2)} = o(n)
\end{multline*}
as $n\to\infty$.
\end{proof}

We note that, likewise, one can show that if $Q(x)=\sum_{k=2}^d q_k x^k$ is a polynomial with
real coefficients $q_k$ and at least one of the coefficients is irrational, then the sequence
$(e(Q(n))_{n\ge 1}$ is a Wiener sequence whose spectral measure is the Lebesgue measure
on the unit circle. Therefore, this sequence is an $L$-sequence. The latter conclusion is
Theorem~1 from~\cite{BNS}. 

\subsection{Arithmetic multiplicative functions}
Recall that an arithmetic function
$f\colon\bN\to\bC$ is called {\em completely multiplicative}
if $f(n)f(m)=f(nm)$ for all natural $n, m$,  and
{\em multiplicative} if $f(n)f(m)=f(nm)$ for all mutually prime natural $n,m$.

\subsubsection{The M\"{o}bius function}
The classical M\"{o}bius function $\mu$ is defined by
\[
\mu (n) =
\begin{cases}
\quad 1, & n {\rm\ square-free\ with\ an\ even\ number\ of\ prime\ factors;} \\
-1, & n {\rm\ square-free\ with\ an\ odd\ number\ of\ prime\ factors;} \\
\quad 0, &  {\rm\ otherwise.}
\end{cases}
\]
One instance of the well-known Chowla conjecture (the so called ``binary Chowla conjecture'') asserts that, for every $k\ge 1$,
\begin{equation}\label{eq:Chowla}
\sum_{1\le s\le n}\mu(s)\mu(s+k) = o(n),\qquad n\to\infty,
\end{equation}
that is, $\mu$ is a Wiener sequence whose spectral measure is the Lebesgue measure.
Thus, applying Theorem~\ref{t1}, we get

\begin{corollary} Assuming Chowla's conjecture~\eqref{eq:Chowla}, the sequence
$(\mu(n))_{n\ge 1}$ is an $L$-sequence.
\end{corollary}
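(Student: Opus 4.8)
The plan is to show that, granting Chowla's conjecture~\eqref{eq:Chowla}, the M\"obius function is a Wiener sequence whose spectral measure is a positive multiple of the Lebesgue measure on $\bT$, and then to quote Theorem~\ref{t1}. The entire argument thus reduces to computing the correlations $\rho(k)$ for every $k\ge 0$ and recognizing the resulting spectral measure. (Since $\mu$ is defined on $\bN$, we set $\mu(0)=0$; this single boundary term does not affect any of the $\tfrac1n$-normalized limits below, so the discrepancy between the ranges $0\le s<n$ and $1\le s\le n$ is immaterial.)

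First I would treat the off-diagonal correlations. Because $\mu$ is real-valued, for each fixed $k\ge 1$ we have $\frac1n\sum_{0\le s<n}\mu(s)\overline{\mu(s+k)}=\frac1n\sum_{0\le s<n}\mu(s)\mu(s+k)$, and this tends to $0$ by~\eqref{eq:Chowla}; hence $\rho(k)=0$, and likewise $\rho(-k)=\overline{\rho(k)}=0$, for every $k\ne 0$. It then remains to compute $\rho(0)$. Here $\mu(s)^2=\done_{\{s\text{ square-free}\}}$, so $\rho(0)$ is the asymptotic density of the square-free integers. A standard sieve computation, writing $\mu(s)^2=\sum_{d^2\mid s}\mu(d)$ and summing, gives $\#\{1\le s\le n:\ s\text{ square-free}\}=n\sum_{d\ge 1}\mu(d)d^{-2}+O(\sqrt n)=n/\zeta(2)+O(\sqrt n)$, so that $\rho(0)=1/\zeta(2)=6/\pi^2$. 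In particular the limit exists for every $k$ and $\rho(0)>0$, so $\mu$ is a nonzero Wiener sequence.

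Next I would identify its spectral measure, which I denote by $\sigma$. By the construction recalled before Theorem~\ref{t1}, the Fourier coefficients of $\sigma$ are $\widehat\sigma(k)=\rho(k)$, equal to $\tfrac{6}{\pi^2}$ when $k=0$ and to $0$ otherwise. These are exactly the Fourier coefficients of $\tfrac{6}{\pi^2}\,\mathrm d\theta$, the corresponding positive multiple of the Lebesgue measure on $\bT$; by uniqueness of the spectral measure we conclude $\sigma=\tfrac{6}{\pi^2}\,\mathrm d\theta$. Since $\rho(0)>0$, this measure is nonzero, and its closed support is therefore all of $\bT$. Theorem~\ref{t1} now applies and shows that $(\mu(n))_{n\ge 1}$ is an $L$-sequence.

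The genuine difficulty is entirely concentrated in Chowla's conjecture~\eqref{eq:Chowla}, which is assumed; granting it, the deduction is routine. The only point requiring a little care is the evaluation of $\rho(0)$: one must verify not merely that the limit exists but that it is \emph{strictly positive}, since otherwise $\sigma$ would be the zero measure, its support would be empty rather than the whole circle, and Theorem~\ref{t1} could not be invoked.
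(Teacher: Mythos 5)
Your proof is correct and follows essentially the same route as the paper: Chowla's conjecture~\eqref{eq:Chowla} kills the off-diagonal correlations, the diagonal one is the (unconditional) density of the square-free integers, so $\mu$ is a Wiener sequence whose spectral measure is a positive constant times Lebesgue measure on the circle, and Theorem~\ref{t1} applies. You merely spell out details the paper leaves implicit---the evaluation $\rho(0)=6/\pi^2>0$ and the identification of the spectral measure from its Fourier coefficients---which is a sound and worthwhile clarification, not a different argument.
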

The question on the asymptotic behaviour of the Taylor series
of the form
\[
\sum_{n\ge 1} \mu(n)\, a_n z^n
\]
was raised by Paul L\'evy in~\cite[Section~18]{Levy}. He hinted that the positive answer to the 
Riemann hypothesis would provide information on this asymptotic behaviour.

\subsubsection{Random multiplicative functions}
Denote by $\mathsf P$ the set of prime numbers.
Let $(X_p)_{p\in \mathsf P}$ be a sequence of independent Steinhaus random variables
(i.e., uniformly distributed on the unit circle), and let $(Y_p)_{p\in \mathsf P}$ be a sequence of independent Rademacher random variables
(i.e., taking the values $\pm 1$ with equal probability).
We consider two random functions: {\em a Steinhaus random completely multiplicative
function} $\xi_S$ determined by
$\xi_{\tt S}(p) = X_p$, $p\in\mathsf P$, that is,
\[
\xi_{\tt S}(n) = \prod_{p^a\,\|\,n} (X_p)^a,
\]
and {\em a Rademacher random multiplicative function} $\xi_{\tt R} $ determined by
$\xi_{\tt R}(p) = Y_p$, $p\in\mathsf P$, on the square-free indices $p$ and $0$ elsewhere,
that is,
\[
\xi_{\tt R}(n) =
\begin{cases}
\prod_{p\, | \, n} X_p & n {\rm\ is\ square-free;} \\
0 & {\rm \ otherwise.}
\end{cases}
\]
The function $\xi_{\tt S}$ randomizes the prime factorization of integers, while
the function $\xi_{\tt R}$ is a randomized version of the M\"obius function.
The study of these random multiplicative functions has a long history and goes back,
at least, to Wintner~\cite{Wintner}.

\begin{theorem} \label{t2} The sequences $\xi_{\tt S}$ and $\xi_{\tt R}$
are almost surely Wiener sequences, their spectral measures being Lebesgue measure on the unit circle, and, hence, these sequence are almost surely $L$-sequences.
\end{theorem}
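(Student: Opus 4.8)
The plan is to deduce Theorem~\ref{t2} from Theorem~\ref{t1}: once we know that, almost surely, each of $\xi_{\tt S}$ and $\xi_{\tt R}$ is a Wiener sequence whose spectral measure is a positive multiple of Lebesgue measure on $\bT$ (and hence has full closed support), Theorem~\ref{t1} applied to the individual realization yields that it is an $L$-sequence. Writing $\xi$ for either function, the diagonal correlation is deterministic: for $\xi_{\tt S}$ we have $|\xi_{\tt S}(s)|\equiv 1$, so $\rho(0)=1$, while for $\xi_{\tt R}$ we have $|\xi_{\tt R}(s)|^2=\done[s\ \text{square-free}]$, so $\rho(0)=6/\pi^2$ by the density of the square-free integers. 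The entire content is therefore to show that, for each fixed $k\ge 1$, the normalized correlation $\frac1n\sum_{0\le s<n}\xi(s)\overline{\xi(s+k)}$ tends to $0$ almost surely; a countable intersection over $k$ then identifies the spectral measure.

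First I would record that the mean correlation vanishes. For $\xi_{\tt S}$, complete multiplicativity and the relation $\bE[X_p^{\,j}]=0$ for $j\ne 0$ give $\bE[\xi_{\tt S}(s)\overline{\xi_{\tt S}(s+k)}]=\prod_p\bE[X_p^{\,a_p-b_p}]=\done[s=s+k]=0$, where $a_p,b_p$ are the exponents of $p$ in $s,s+k$. For $\xi_{\tt R}$, the product $\xi_{\tt R}(s)\xi_{\tt R}(s+k)$ equals $\prod_p Y_p^{e_p}$, and a nonzero expectation forces every $e_p$ to be even, i.e. $s$ and $s+k$ to share the same prime support; since both are square-free this means $s=s+k$, impossible for $k\ge 1$. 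Thus $\bE[\xi(s)\overline{\xi(s+k)}]=0$ in both cases.

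The heart of the argument is a second moment bound $\bE\bigl|\sum_{0\le s<n}\xi(s)\overline{\xi(s+k)}\bigr|^2\lesssim_k n$, obtained by expanding the square into a fourfold correlation over pairs $(s,t)$. For $\xi_{\tt S}$, complete multiplicativity collapses the summand to $\bE[\xi_{\tt S}(s(t+k))\overline{\xi_{\tt S}((s+k)t)}]$, which is nonzero only when $s(t+k)=(s+k)t$, i.e. $s=t$; hence the sum equals $n+O(1)$. For $\xi_{\tt R}$ the summand is nonzero only when the prime supports $A,B,C,D$ of $s,s+k,t,t+k$ satisfy $A\triangle B\triangle C\triangle D=\emptyset$. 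Setting $g=\gcd(s,k)$ and $h=\gcd(t,k)$ and using square-freeness, this condition becomes the Diophantine equation $s(s+k)/g^2=t(t+k)/h^2$. The main obstacle is to count its solutions, and here the key observation is that for each value $v$ and each divisor $g\mid k$ the relation $s(s+k)=g^2v$ is a monic quadratic in $s$ with at most one positive root; hence at most $\tau(k)$ integers $s$ produce a given $v$. Writing $R(v)$ for the number of $s\le n$ with $s(s+k)/\gcd(s,k)^2=v$, we get $\sum_v R(v)^2\le\tau(k)\sum_v R(v)\le\tau(k)\,n$, which is exactly the required bound.

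Finally I would upgrade the $L^2$ estimate to almost sure convergence. Since $\bE\bigl|\tfrac1n\sum_{0\le s<n}\xi(s)\overline{\xi(s+k)}\bigr|^2\lesssim_k n^{-1}$, Chebyshev's inequality and Borel--Cantelli along the subsequence $n_j=j^2$ give $\tfrac1{n_j}\sum_{0\le s<n_j}\xi(s)\overline{\xi(s+k)}\to 0$ almost surely. For $n_j\le n<n_{j+1}$ the summands are bounded by $1$ in modulus, so the partial sums differ by at most $n_{j+1}-n_j=O(\sqrt{n_j})$, and the monotonicity of $1/n$ interpolates the convergence to all $n$. Intersecting the resulting full-measure events over the countable family $k\ge 1$ shows that, almost surely, $\rho(k)=0$ for every $k\ge 1$, so the spectral measure is the stated multiple of Lebesgue measure and has full support; an application of Theorem~\ref{t1} then completes the proof.
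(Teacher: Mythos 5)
Your proof is correct, and its core estimate is genuinely different from --- and sharper than --- the paper's. The paper's Lemma~\ref{l11} bounds both second moments by $\card(\mathsf B)$, where $\mathsf B$ is the set of pairs $(s_1,s_2)$ for which $s_1(s_1+k)s_2(s_2+k)$ is a perfect square, and counts $\mathsf B$ by splitting according to the size of the square divisors $e_1,f_1$ and invoking the divisor bound $\tau(M)=O(M^\ep)$, arriving at $O(m^{7/4})$ (in fact $O_\ep(m^{5/3+\ep})$); the almost-sure upgrade is then run over blocks $[m^{20},(m+1)^{20})$ with exponents tuned to that loss. You instead obtain the optimal-order bound $O_k(n)$: for $\xi_{\tt S}$ you observe that complete multiplicativity makes every off-diagonal term vanish identically, since $\bE\bigl[\xi_{\tt S}(a)\overline{\xi_{\tt S}(b)}\bigr]=\done_{\{a=b\}}$ and $s(t+k)=(s+k)t$ forces $s=t$, so the second moment is exactly the diagonal count (the paper, by contrast, wastefully bounds the Steinhaus case by $\card(\mathsf B)$ as well); for $\xi_{\tt R}$ you pass to the square-free parts $s(s+k)/\gcd(s,k)^2$ --- legitimate because $\gcd(s,s+k)=\gcd(s,k)$ and the two cofactors are coprime and square-free --- and note that each value $v$ is attained by at most $\tau(k)$ integers $s$ (at most one per divisor $g$ of $k$, since $s\mapsto s(s+k)$ is strictly increasing on $s>0$), whence $\sum_v R(v)^2\le\tau(k)\sum_v R(v)\le \tau(k)\,n$. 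This linear bound is what permits your simpler almost-sure upgrade via the subsequence $n_j=j^2$ and bounded-increment interpolation, in place of the paper's block decomposition. A further small point in your favor: you identify the normalization correctly ($\rho(0)=1$ for $\xi_{\tt S}$, $\rho(0)=6/\pi^2$ for $\xi_{\tt R}$), whereas the paper loosely calls both spectral measures ``Lebesgue measure''; of course, only the full support matters for applying Theorem~\ref{t1}, which both arguments do identically.
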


The proof is based on the following lemma:
\begin{lemma} \label{l11} Given $k\ge 1$, we have
\begin{align*}
\mathbb E\, \bigl| \sum_{m\le s\le\lambda m}\,
\xi_{\tt S}(s)\overline{\xi_{\tt S}(s+k)}\, \bigr|^2
&= O(m^{7/4})\,, \\
\mathbb E\, \bigl[ \sum_{m\le s\le\lambda m}\,
\xi_{\tt R}(s) \xi_{\tt R}(s+k)\, \bigr]^2
&=O(m^{7/4})\,,
\end{align*}
uniformly in $\lambda\in[1,2]$.
\end{lemma}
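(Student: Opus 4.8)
The plan is to expand the square into a double sum over pairs $s, s'$ and exploit the multiplicative structure to compute the expectation term-by-term, reducing the problem to a counting problem about coincidences among the integers $s, s+k, s', s'+k$. I treat the Steinhaus case in detail; the Rademacher case is entirely analogous (with the extra cosmetic point that $\xi_{\tt R}$ vanishes on non-square-free integers, which only helps). Writing $Z_s = \xi_{\tt S}(s)\overline{\xi_{\tt S}(s+k)}$, I would first compute
\[
\mathbb E\, \bigl| \sum_{m\le s\le\lambda m} Z_s \bigr|^2
= \sum_{m\le s, s'\le \lambda m} \mathbb E\bigl[ Z_s \overline{Z_{s'}} \bigr]
= \sum_{m\le s, s'\le \lambda m} \mathbb E\bigl[ \xi_{\tt S}(s)\overline{\xi_{\tt S}(s+k)}\, \overline{\xi_{\tt S}(s')}\xi_{\tt S}(s'+k) \bigr]\,.
\]
The key algebraic fact is that for independent Steinhaus variables $X_p$, one has $\mathbb E[ \prod_p X_p^{a_p} ] = 1$ if all exponents $a_p$ vanish and $0$ otherwise. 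Collecting the prime factorizations of $s, s', s+k, s'+k$, the integrand $\xi_{\tt S}(s)\overline{\xi_{\tt S}(s')}\cdot\overline{\xi_{\tt S}(s+k)}\xi_{\tt S}(s'+k)$ is a monomial $\prod_p X_p^{e_p}$ where $e_p = v_p(s) - v_p(s') - v_p(s+k) + v_p(s'+k)$ and $v_p$ is the $p$-adic valuation. Thus the expectation is $1$ precisely when the multiset identity $s\cdot(s'+k) = s'\cdot(s+k)$ holds at the level of prime factorizations, i.e. exactly when $s(s'+k) = s'(s+k)$, and $0$ otherwise.

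Next I would solve this Diophantine condition. Expanding, $s(s'+k) = s'(s+k)$ simplifies to $sk = s'k$, hence $s = s'$. So for the Steinhaus function the off-diagonal contributions all vanish and one is left only with the diagonal $s = s'$, giving $\mathbb E| \sum_s Z_s |^2 = \#\{s : m\le s\le \lambda m\} = O(m)$, which is even stronger than the claimed $O(m^{7/4})$. Here I suspect the genuine content — and the reason the stated bound is only $O(m^{7/4})$ rather than $O(m)$ — lives in the Rademacher case, where $\xi_{\tt R}$ is supported on square-free integers and the relevant expectation $\mathbb E[Y_p^{a}]$ equals $1$ when $a$ is even and $0$ when $a$ is odd. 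There the monomial survives whenever each prime appears to an even total multiplicity across $s, s+k, s', s'+k$, a condition satisfied by more pairs than just the diagonal: one must count square-free $s, s'$ for which $s(s+k)$ and $s'(s'+k)$ share the same radical, equivalently $s(s+k)s'(s'+k)$ is a perfect square. The main obstacle is therefore bounding the number of such coincidences.

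To control this count I would reduce it to estimating, for each fixed value $D$, the number of square-free $s\in[m,\lambda m]$ with $s(s+k) = D w^2$ for some squarefull complement, and then sum over the admissible $D$; equivalently, I would bound the number of pairs $(s,s')$ for which $s(s+k)s'(s+k)'$ is a full square. This is a standard but delicate divisor-type estimate: writing $s(s+k) = a b^2$ with $a$ square-free, the number of $s\le \lambda m$ producing a given square-free kernel $a$ is governed by solutions of $a b^2 = s(s+k)$, a Pell-like / conic counting problem, and the total number of coincident pairs is $O(m^{3/4+o(1)})$ after summing the divisor bounds $\sum_{a} d(a)^2$-type contributions over $a \lesssim m$. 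Feeding this into the diagonal-plus-coincidence decomposition gives the diagonal $O(m)$ plus the off-diagonal $O(m^{7/4})$ (the exponent $7/4$ absorbing the $m^{3/4}$ pair count against the $O(m)$ trivial factor, with room to spare), uniformly in $\lambda\in[1,2]$ since all estimates depend only on the length of the interval. The heart of the argument, and the step I expect to demand the most care, is precisely this square-coincidence count; once it is in place, both displayed bounds follow by assembling the diagonal and off-diagonal pieces.
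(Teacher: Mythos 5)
Your opening moves match the paper's: expand the square, use independence to kill all terms whose monomial in the $X_p$ (resp.\ $Y_p$) is nontrivial, and reduce to a counting problem. Your analysis of the Steinhaus case is correct and in fact sharper than what the paper records: the surviving terms are exactly those with $s(s'+k)=s'(s+k)$, i.e.\ $s=s'$, so that expectation is $O(m)$. The paper instead bounds \emph{both} cases at once by $\card(\mathsf B)$, where $\mathsf B$ is the set of pairs $(s_1,s_2)\in[m,\lambda m]^2$ for which $s_1(s_1+k)s_2(s_2+k)$ is a perfect square (the diagonal lies in $\mathsf B$, so this is consistent with your observation). You also arrive, after some internal vacillation (you first call the Rademacher case ``entirely analogous'' and say the square-free support ``only helps'', then correctly discover it is the only nontrivial case), at the right identification: the Rademacher expectation is the number of pairs in $\mathsf B$ with all four entries square-free, so everything hinges on bounding $\card(\mathsf B)$.

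That bound is precisely where your proposal has a genuine gap: the count is asserted, not proven, and the surrounding details do not cohere. First, the square-free kernel $a$ of $s(s+k)$ ranges up to $\asymp m^2$, not $a\lesssim m$, so ``summing $\sum_a d(a)^2$-type contributions over $a\lesssim m$'' is not a meaningful plan as stated, and no Pell/conic estimate is actually carried out. Second, your exponent bookkeeping is internally inconsistent: every surviving term of the double sum has expectation exactly $1$, so the expectation equals (diagonal count) $+$ (off-diagonal coincidence count). If your claimed $O(m^{3/4+o(1)})$ off-diagonal count were proved, the lemma would follow with the much better bound $O(m)$; conversely, there is no step in the computation in which an off-diagonal count gets multiplied ``against the $O(m)$ trivial factor'' to produce $m^{7/4}$ --- that arithmetic corresponds to nothing and reads like reverse-engineering the stated exponent. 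For comparison, the paper's count is elementary and avoids Pell-type analysis altogether: write $d_i=\gcd(s_i,s_i+k)$ (a divisor of $k$), let $e_1^2,f_1^2$ be the largest square divisors of $s_1/d_1$ and $(s_1+k)/d_1$; membership in $\mathsf B$ forces $s_1(s_1+k)e_2^2f_2^2d_2^2=s_2(s_2+k)e_1^2f_1^2d_1^2$, hence $s_1\mid s_2(s_2+k)e_1^2f_1^2d_1^2$. When $e_1,f_1\le m^{1/3}$, the divisor bound gives $O_\ep(m\cdot m^{1/3}\cdot m^{1/3}\cdot m^\ep)=O_\ep(m^{5/3+\ep})$ pairs; when $e_1>m^{1/3}$ (or $f_1>m^{1/3}$), the constraint $e_1^2\mid s_1$ gives at most $\sum_{e_1>m^{1/3}} m^2/e_1^2\lesssim m^{5/3}$ pairs. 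Altogether $\card(\mathsf B)\lesssim_\ep m^{5/3+\ep}\le m^{7/4}$. To complete your proof you must either supply a rigorous version of your coincidence count or substitute an argument of this kind; as it stands, the heart of the lemma is missing.
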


\begin{proof}
Since
$$
\mathbb E\, \Bigl|\,\sum_{m\le s\le\lambda m}\,
\xi_{\tt S}(s)\overline{\xi_{\tt S}(s+k)}\,\Bigr|^2
=
\doublesum_{m\le s_1,s_2\le\lambda m}\,
\mathbb E\, \bigl( \xi_{\tt S}(s_1)\overline{\xi_{\tt S}(s_1+k)}\, \overline{\xi_{\tt S}(s_2)}\, \xi_{\tt S}(s_2+k)\, \bigr),
$$
and the Steinhaus random variables $X_p$, $p\in \mathsf P$, are independent,
after a minute reflection we convince ourselves that
$$
\mathbb E\, \Bigl|\, \sum_{m\le s\le\lambda m}\,
\xi_{\tt S}(s)\overline{\xi_{\tt S}(s+k)} \, \Bigr|^2 \le \card(\mathsf B),
$$
where
$$
\mathsf B = \bigl\{ (s_1,s_2)\in[m,\lambda m]\times[m,\lambda m] : s_1(s_1+k)s_2(s_2+k)\text{\ is a square}\bigr\}.
$$
Similarly,
$$
\mathbb E \Bigl[\, \sum_{m\le s\le\lambda m}\,
\xi_{\tt R}(s)\xi_{\tt R}(s+k)\, \Bigr]^2 = \card(\mathsf B)\,.
$$
It remains to estimate the cardinality of the set $\mathsf B$.

Given $(s_1,s_2)\in[m,\lambda m]\times[m,\lambda m]$, set $d_1=\gcd(s_1,s_1+k)$, $d_2=\gcd(s_2,s_2+k)$. Then $d_1$ and $d_2$
are divisors of $k$. Next, let $e_1^2,f_1^2,e_2^2,f_2^2$ be the largest square divisors of $s_1/d_1$, $(s_1+k)/d_1$,
$s_2/d_2$, and $(s_2+k)/d_2$. Then $(s_1,s_2)\in\mathsf B$ if and only if
$$
\frac{(s_1/d_1) \cdot ((s_1+k)/d_1)}{e_1^2f_1^2} =
\frac{(s_2/d_2) \cdot ((s_2+k)/d_2)}{e_2^2f_2^2}.
$$
Let $\mathsf B_1=\{(s_1,s_2)\in\mathsf B\colon
e_1,f_1\le m^{1/3}\}$, $\mathsf B_2=\mathsf B\setminus \mathsf B_1$.
First, we will bound the cardinality of $\mathsf B_1$ and then of $\mathsf B_2$.

For every choice of $s_2,d_1,e_1,f_1$, the integer $s_1$ divides
$M = s_2(s_2+k)e_1^2f_1^2d_1^2$ (recall that
$ s_1(s_1+k)e_2^2f_2^2d_2^2=s_2(s_2+k)e_1^2f_1^2d_1^2 $), and
the number of divisors of $M$ is $O(M^\ep) = O(m^\ep)$, $\ep>0$
(see, for example, \cite[Theorem~315]{HW}). Furthermore, the number of different choices of
$s_2$ is $\simeq m$; there is only a bounded number of different choices of $d_1$
(since the integer $k$ is fixed and $d_1$ divides $k$); and the number of different choices
of $e_1$, as well as of $f_1$ does not exceed $m^{1/3}$. Thus,
$$
\card(\mathsf B_1)\lesssim_{\ep} m^{(5/3)+\ep}\,.
$$

To bound the cardinality of $\mathsf B_2$, we observe that if $(s_1,s_2)\in\mathsf B_2$,
then either $e_1>m^{1/3}$ or $f_1>m^{1/3}$.
Now,
\begin{multline*}
\card\bigl\{ (s_1,s_2)\in\mathsf B\colon e_1>m^{1/3}\bigr\}
\le \sum_{m^{1/3}<e_1\le(\lambda m)^{1/2}}\,\,\doublesum_{\substack{m\le s_1,s_2\le \lambda m \\ s_1=ue_1^2}}1 \\
\lesssim  \sum_{e_1 > m^{1/3}}\, \frac{m^2}{e_1^2} \lesssim m^{5/3}.
\end{multline*}
In the same way, $\card\bigl\{ (s_1,s_2)\in\mathsf B: f_1>m^{1/3}\bigr\}\lesssim m^{5/3}$.
\end{proof}

\begin{proof}[Proof of Theorem~\ref{t2}] Let $\xi$ be either $\xi_{\tt S}$ or $\xi_{\tt R}$.
We will show that $\xi$ is a Wiener sequence with the spectral measure
being the Lebesgue measure on
the unit circle\footnote{
Likely, this fact can be also extracted from results of the recent work of Najnudel~\cite{Najn}, at least, in the completely multiplicative case.}.
The rest follows from Theorem~\ref{t1}.

Fix $k\ge 1$ and split the sum
\[
\sum_{s\ge 1} \xi(s) \overline{\xi(s+k)}
\]
into the blocks $m^{20}\le s < (m+1)^{20}$, $m=1, 2, \ldots$.
By Lemma~\ref{l11}, for every $m\ge 1$, we have
$$
\bP\Bigl(\Bigl|\sum_{m^{20}\le s<(m+1)^{20}}\xi (s)\overline{\xi (s+k)}\, \Bigr| > m^{(15/16) \cdot 20} \Bigr) \lesssim m^{-20/8},
$$
and hence, by the Borel-Cantelli lemma, almost surely, for sufficiently large $m$,
$$
\Bigl|\sum_{m^{20}\le s<(m+1)^{20}}\xi (s)\overline{\xi (s+k)}\, \Bigr| \le m^{75/4}.
$$
Therefore, almost surely,
\begin{multline*}
\Bigl|\sum_{1\le s\le n} \xi (s)\overline{\xi (s+k)}\, \Bigr|
\le \sum_{1\le \ell\le [n^{1/20}]}\,
\Bigl|\, \sum_{\ell^{20}\le s<(\ell+1)^{20}}\, \xi (s)\overline{\xi(s+k)}\, \Bigr|\, + \,
\sum_{[n^{1/20}]^{20}\le s\le n}\, 1
\\ \lesssim \sum_{1\le \ell\le [n^{1/20}]}\, \ell^{75/4} + n^{19/20}
\lesssim n^{79/80}+n^{19/20}=o(n),\qquad n\to\infty,
\end{multline*}
proving the theorem.
\end{proof}


\bigskip
\medskip

\noindent J.B.:
School of Mathematics, Tel Aviv University, Tel Aviv, Israel
\newline {\tt benatar@mail.tau.ac.il}
\smallskip\newline\noindent{A.B.: Institut de Mathematiques de Marseille,
Aix Marseille Universit\'e, CNRS, Centrale Marseille, I2M, Marseille, France
\newline {\tt alexander.borichev@math.cnrs.fr}
\smallskip\newline\noindent M.S.:
School of Mathematics, Tel Aviv University, Tel Aviv, Israel
\newline {\tt sodin@tauex.tau.ac.il}
}

\end{document}